\newcommand{\fdiff}[4]{\ensuremath{ \upsilon^{#4 }_{ #3 }  #1  \left(  #2 \right)  }}
\newcommand{\fdiffplus}[3]{ \fdiff {#1}{#2}{+}{#3} }
\newcommand{\fdiffs}[2]{\ensuremath{ \upsilon^{#1 }_{ #2 }    }}
\newcommand{\fdiffmin}[3]{ \fdiff {#1}{#2}{-}{#3} }
\newcommand{\fdiffpm}[3]{ \fdiff {#1}{#2}{\pm}{#3} }
\newcommand{\fracvar}[4]{\ensuremath{ \upsilon_{ #3 }^{ #4} \left[ #1 \right] \left(  #2 \right)   }}
\newcommand{\fracvarplus}[3]{ \fracvar {#1}{#2}{#3}{\epsilon+} }
\newcommand{\fracvarmin}[3]{ \fracvar {#1}{#2}{#3}{\epsilon -} }
\newcommand{\fracvarpm}[3]{ \fracvar {#1}{#2}{#3}{\epsilon \pm} }
\newcommand{\llim}[3]{\ensuremath{ \lim\limits_{ #1 \rightarrow #2} #3 }}
\newcommand{\fclass}[2]{\ensuremath{  \mathbb{#1}^{\, #2} }}
\newcommand{\soc}[2]{\ensuremath{ \chi_{#1}^{#2} }}
\newcommand{\holder}[1]{\fclass{H}{#1} }
\newcommand{\osc}[4]{\ensuremath{ \mathrm{osc}_{ #3 }^{ #4} [ #1 ] \left(  #2 \right)   }}
\newcommand\smallO{
	\mathchoice
	{{\scriptstyle\mathcal{O}}}% \displaystyle
	{{\scriptstyle\mathcal{O}}}% \textstyle
	{{\scriptscriptstyle\mathcal{O}}}% \scriptstyle
	{\scalebox{.7}{$\scriptscriptstyle\mathcal{O}$}}%\scriptscriptstyle
}
\newcommand{\bigoh}[1]{ \ensuremath{ \smallO \left(  #1 \right) }   }
\newcommand{\deltaop}[4]{\ensuremath{ \Delta_{ #3 }^{ #4} \left[ #1 \right] \left(  #2 \right)   }}
\newcommand{\deltaplus}[2]{ \deltaop {#1}{#2}{\epsilon}{+} }
\newcommand{\deltamin}[2]{ \deltaop {#1}{#2}{\epsilon}{-} }
\newcommand{\sep}{,}
\newcommand{\epnt}{\; .}
\newcommand{\ecma}{\; ,}
\newcommand{\frdiffii}[3]{\ensuremath{\,_{#3}\mathbf{I}^{#1}_{#2}}}
\newcommand{\frdiffiix}[2]{\frdiffii{#1}{x}{#2}}
\newtheorem{theorem}{Theorem}
\newtheorem{lemma}{Lemma}
\newtheorem{corollary}{Corollary}
\newtheorem{proposition}{Proposition}
\newtheorem{condition}{Condition}
\newtheorem{definition}{Definition}
\newtheorem{remark}{Remark}
\newtheorem{example}{Example}
	\newcommand{\bigohone}[1]{ \ensuremath{ \smallO_{#1}  }   }
\newcommand{\svar}[4]{\ensuremath{ \mathcal{S}_{ #3 }^{ #4} \left[ #1 \right] \left(  #2 \right)   }}
\newcommand{\svarplus}[3]{ \svar{#1}{#2}{\epsilon+}{#3} }
\newcommand{\svarpm}[3]{ \svar{#1}{#2}{\epsilon \pm}{#3} }
	\newcommand{\holderspect}{ \ensuremath{   \mathbb{E}^{\pm}\, } }	
	 \newcommand{\svars}[2]{\ensuremath{ \mathcal{S}_{ #1 }^{ #2}    }}
\begin{document}
  
   \title[Fractional velocity as a tool for non-linear problems] {Fractional velocity as a tool for the study of non-linear problems}
   
   \author {Dimiter Prodanov}
   \address{Correspondence: Environment, Health and Safety, IMEC vzw, Kapeldreef 75, 3001 Leuven, Belgium
   e-mail: Dimiter.Prodanov@imec.be, dimiterpp@gmail.com}

   %%%%%%%%%%%%%%%%%%%%%%5
 
   	\begin{abstract}
   		
   	Singular functions and, in general, H\"older functions represent conceptual models of nonlinear physical phenomena.  
   	The purpose of this survey is to demonstrate the applicability of fractional velocity as a tool to characterize Holder and in particular singular functions.
   	Fractional velocities are defined as limit of  the difference quotient of a fractional power and they generalize the local notion of a derivative.
    On the other hand, their properties contrast some of the usual properties of derivatives. 
    One of the most peculiar properties of these operators is that the set of their non trivial values is disconnected.
    This can be used for example to model instantaneous interactions, for example  Langevin dynamics. 
   	Examples are given by the De Rham and Neidinger's functions, represented by iterative function systems.
	Finally the conditions for equivalence with the Kolwankar-Gangal local fractional derivative are investigated. 
  
   	 \medskip
   		
   		{\it MSC 2010\/}: Primary 26A27: Secondary 26A15 \sep \  26A33 \sep\ 26A16 \sep   47A52\sep  4102  
   		
 \smallskip
   		
{\it Key Words and Phrases}: 
fractional calculus;  
non-differentiable functions; 
H\"older classes; 
pseudo-differential operators;
%%%%%%%%%%%%%%%

 \end{abstract}

	   	\maketitle  
	   	
   	%\linenumbers
   	
   	%% main text
   	%%%%%%%%%%%%%%%%%%%%%
   	%	Introduction
   	%%%%%%%%%%%%%%%%%%%%%
   	\section{Introduction}
   	\label{seq:intro}

	Non-linear and fractal physical phenomena are abundant in nature \cite{Mandelbrot1982, Mandelbrot1989}.	  
 %	H\"older functions can represent mathematical models of nonlinear physical phenomena.
 	Examples of non-linear phenomena can be given by the continuous time random walks resulting in fractional diffusion equations \cite{Metzler2004}, 
 	fractional conservation of mass  \cite{Wheatcraft2008} or non-linear viscoelasticity\cite{Caputo1971, Mainardi1997}.
 	Such models exhibit global dependence through the action of the nonlinear convolution operator (i.e differintegral).	
 	Since this setting opposes the principle of locality there can be problems with the interpretation of the obtained results. 
 	In most circumstances such models can be treated as asymptotic as it has been demonstrated for the time-fractional continuous time random walk \cite{Gorenflo2008}. 
 	The asymptotic character of these models leads to the realization that they  describe \textit{mesoscopic} behavior of the concerned systems.
 	The action of fractional differintegrals on analytic functions results in H\"older functions representable by fractional power series (see for example \cite{Oldham1974}).
   
    % Recursion models result in non-differentiable or singular functions.
  	Fractals are becoming essential components in the modeling and simulation of natural phenomena encompassing many temporal or spatial scales \cite{Schroeder1991}.
  	The irregularity and self-similarity under scale changes are the main attributes of the morphologic complexity of cells and tissues \cite{Losa1996}.
  	Fractal shapes are frequently built by iteration of function systems via recursion \cite{Hutchinson1981, Darst2009}.
  	On the other hand fractal shapes observable in natural systems  typically span only several recursion levels.
  	This fact draws attention to one particular class of  functions, called \textit{singular}, which are differentiable but for which at most points the derivative vanishes. 
  	%Most of the known examples of singular functions are also H\"older. 
   	There are fewer tools for the study of singular functions since one of the difficulties is that for them the Fundamental Theorem of calculus fails and hence they can not be represented by a non-trivial differential equation. 
    %	Singular can be studied for example using multiscale approaches involving non-linear transformations \cite{Prodanov2016b}.
   
   	Singular signals can be considered as toy-models for strongly-non linear phenomena. 
   	%Derivatives can be viewed as mathematical idealizations of the linear growth. 
   	Mathematical descriptions of strongly non-linear phenomena necessitate relaxation of the assumption of differentiability \cite{Nottale2010}. 
   	While this can be achieved also by fractional differintegrals, or by multiscale approaches \cite{Cresson2016}, the present work focuses on local descriptions in terms of limits of difference quotients \cite{Cherbit1991} and  non-linear scale-space transformations \cite{Prodanov2016b}. 
   	The reason for this choice is that locality provides a direct way of physical interpretation of the obtained results.   		
   	In the old literature, difference quotients of functions of fractional order have been considered at first by du Bois-Reymond  \cite{BoisReymond1875} and Faber \cite{Faber1909} in their studies of the point-wise differentiability of functions. 
   	%The concept implied what is now known as Holder-continuity of the function.
   	While these initial developments followed from purely mathematical interest, later works were inspired from physical research questions. Cherbit \cite{Cherbit1991} and later on Ben Adda and Cresson  \cite{Adda2001} introduced the notion of \textsl{fractional velocity} as the limit of the fractional difference quotient. 
   	Their main application was the study of fractal phenomena and physical processes for which the instantaneous velocity was not well defined \cite{Cherbit1991}.

 	This work will further demonstrate applications to singular functions.
 	Examples are given by the De Rham and Neidinger's functions, represented by iterative function systems.
 	In addition, the form of the Langevin equation is examined for the requirements of path continuity. 	
	Finally,  the relationship between fractional velocities and the localized versions of fractional derivatives in the sense of Kolwankar-Gangal will be demonstrated.

%%%%%%%%%%%%%%%%%%%%%
%	Section
%%%%%%%%%%%%%%%%%%%%%

\section{General definitions and notations}
  	\label{sec:definitions}
  	  	
  	The term \textit{function}  denotes a mapping $ f: \mathbb{R} \mapsto \mathbb{R} $ (or in some cases $\mathbb{C} \mapsto \mathbb{C}$). 
  	The notation $f(x)$ is used to refer to the value of the mapping at the point \textit{x}.
    The term \textit{operator}  denotes the mapping from functional expressions to functional expressions.
  	Square brackets are used for the arguments of operators, while round brackets are used for the arguments of functions.	
    $Dom[f]$ denotes the domain of definition of the function $f(x)$.
    The term Cauchy sequence will be always interpreted  as a null sequence.
 
 	BVC[I] will mean that the function $f$ is continuous of bounded variation (BV) in the interval \textit{I}.
 	
  	 %%%%%%%%%
  	 % Definition
  	 %%%%%%%%%
  	 \begin{definition}[Asymptotic $\smallO$ notation]
   	 	\label{def:bigoh}
   	 	The notation  $\bigoh{x^\alpha}$ is interpreted as the convention that 
   	 	$$
   	 	\llim{x}{0}{ \frac{\bigoh{x^\alpha}}{x^\alpha} } =0 
   	 	$$
   	 	for $\alpha >0 $.
   	 	The notation $\bigohone{x}$ will be interpreted to indicate a Cauchy-null sequence with no particular power dependence of $x$.
	\end{definition}
   	 %%%%%%%%%
   	 % Definition
   	 %%%%%%%%%
   	 \begin{definition}
   	 	\label{def:holder}
   	 	We say that $f$ is of (point-wise) H\"older  class \holder{\beta} if for a given $x$ there exist two positive constants 
   	 	$C, \delta \in \mathbb{R} $ that for an arbitrary $ y \in Dom[ f ]$ and given $|x-y| \leq \delta$ fulfill the inequality
   	 	$
   	 	| f (x) - f (y) |  \leq C |x-y|^\beta
   	 	$, where $| \cdot |$ denotes the norm of the argument.  	
   	 
   	 \end{definition}
   	 %%%%%%%%%%%%%%%%%%%%%%%%%%%  
  	Further generalization of this concept will be given by introducing the concept of \textbf{F-analytic} functions.
  %%%%%%%%%%%%%%%%%
  %  Definition
  %%%%%%%%%%%%%%%%%
  \begin{definition}
  	\label{def:fanalytic}
  	Consider a countable ordered set $\mathbb{E}^{\pm}=\left\lbrace \alpha_1 < \alpha_2 < \ldots   \right\rbrace $ of positive real constants $\alpha$.
  	Then F-analytic $\fclass{F}{E}$ is a function which is defined by the convergent (fractional) power series
  	\[
  	F(x): = c_0 + \sum\limits_{\alpha_i \in \holderspect } c_i \left( \pm x + b_i \right) ^{\alpha_i}
  	\]
  	for some sets of constants $\left\lbrace  b_i \right\rbrace $ and  $\left\lbrace  c_i \right\rbrace $.
  	The set \holderspect will be denoted further as the H\"older spectrum of $f$ (i.e. $\holderspect_f$ ).
  \end{definition}
  %%%%%%%%%%%%%%%%%%
  \begin{remark}
  	  A similar definition is used in Oldham and Spanier \cite{Oldham1974}, however, there the fractional exponents were considered to be only rational-valued for simplicity of the presented arguments.
  	  The minus sign in the formula corresponds to reflection about a particular point of interest. 
  	Without loss of generality only the plus sign convention will be assumed further. 
  \end{remark}
	%%%%%%%%%%%%%%%%%%%

  	%%%%%%%%%%%%%%%%%%%%%%%%%%
  	% Definition
  	%%%%%%%%%%%%%%%%%%%%%%%%%%	 
  	\begin{definition}
  		\label{def:deltas}
  		Define the parametrized difference operators acting on a function $f(x)$  as
  		\begin{flalign*}
	  	\deltaplus{f}{x}   & :=  f(x + \epsilon) - f(x) \ecma\\
  		\deltamin{f}{x} & :=  f(x) - f(x - \epsilon)  %\ecma \\
  		%\deltaop {f}{x}{\epsilon}{2}  &:=  f(x + \epsilon) -2 f(x) + f(x - \epsilon) \ecma
  		\end{flalign*}
  		where $\epsilon>0$. The first one we refer to as \textit{forward difference} operator, 
  		the second one we refer to as \textit{backward difference} operator. % and the third one as \textit{2\textsuperscript{nd} order difference} operator.
  	\end{definition}
  	
  	%%%%%%%%%%%%%%%%
  	% Section
  	%%%%%%%%%%%%%%%%%
\section{Point-wise oscillation of functions}
\label{sec:osc}
	
The concept of point-wise oscillation is used to characterize the set of continuity of a function.
%%%%%%%%%%%%%%%%%%%%%%
%  Definition
%%%%%%%%%%%%%%%%%%%%%%
\begin{definition}
	\label{def:limosc}
Define forward oscillation and its limit as the operators
 \begin{flalign*}
        	%\mathrm{osc_{\epsilon}^{+}} [f] (x) 
        	\osc{f}{x}{\epsilon}{+} : =  & \sup_{[x , x + \epsilon]} {[ f]} -
        		\inf_{[x , x + \epsilon]} {[ f]} \\
     	\mathrm{osc^{+}} [f] (x) : = & \llim{\epsilon}{0}{ \left( \sup_{[x , x + \epsilon]}   -
     		\inf_{[x , x + \epsilon]} \right) f} = \llim{\epsilon}{0}{\osc{f}{x}{\epsilon}{+}}
 \end{flalign*}
		and backward oscillation and its limit as the operators
	\begin{flalign*}
		%\mathrm{osc_{\epsilon}^{-}} [f] (x) 
		\osc{f}{x}{\epsilon}{-} : =  & \sup_{[x - \epsilon , x ]} {[ f]} -
			\inf_{[x -   \epsilon, x ]} {[ f]} \\
	\mathrm{osc^{-}} [f] (x) : = & \llim{\epsilon}{0}{ \left( \sup_{[x - \epsilon , x ]}   -
		\inf_{[x -   \epsilon, x ]}\right)  f} = \llim{\epsilon}{0}{\osc{f}{x}{\epsilon}{-}}
	\end{flalign*}
	according to previously introduced notation \cite{Prodanov2015}.
 	\end{definition}
 	%%%%%%%%%%%%%%%%%%%%%%%%%%%%
This definitions will be used to identify two conditions, which help characterize fractional derivatives and velocities.

   	%%%%%%%%%%%%%%%%%%%%%%%%%%%
   	% Section
   	%%%%%%%%%%%%%%%%%%%%%%%%%%%
   	\section{Fractional variations and fractional velocities}
   	\label{sec:frdiff}
 
  	General conditions for the existence of the fractional velocity were demonstrated in \cite{Prodanov2017}.
	It was further established that for fractional orders fractional velocity is continuous only if it is zero.
%	On the other hand, this pseudo-differential operator can be used in complement to ordinary derivatives
%	to characterize some singular functions \cite{Prodanov2016b,Prodanov2016a}.
 
  %%%%%%%%%%%%%%%%%%%%%5
  %	Definition
  %%%%%%%%%%%%%%%%%%%%%
  \begin{definition}
  	\label{def:fracvar}
  	Define \textit{Fractional Variation} operators of order $0 \leq \beta \leq 1$ as
  	\begin{align}
  	\label{eq:fracvar1}
  	\fracvarplus {f}{x}{\beta} := \frac{ \deltaplus{f}{x}  }{\epsilon ^\beta}  %=\frac{ f(x+ \epsilon) - f(x) }{\epsilon ^\beta} 
  	\\
  	\fracvarmin {f}{x}{\beta} :=  \frac{ \deltamin{f}{x} }{\epsilon ^\beta}  %=\frac{ f(x)- f( x- \epsilon)  }{\epsilon  ^\beta} 
  	\end{align}
  	for a positive  $\epsilon$.
  \end{definition}
%%%%%%%%%%%%%%%%%%%%%%%%%%%%%%%%%%%%%%%%%%%%%%%%%%%  
   
      %%%%%%%%%%%%
      % Definition
      %%%%%%%%%%%%  
       \begin{definition}[Fractional order velocity]
       	\label{def:frdiff}
       	Define  the \textsl{fractional velocity} of fractional order $\beta$ as the limit  
       	\begin{align}
       	\label{eq:fracdiffa}
       	\fdiffpm {f}{x}{\beta} &:= \llim{\epsilon}{0}{\frac{\Delta^{\pm}_{\epsilon} [f ] (x) }{\epsilon ^\beta}} 
       	=\llim{\epsilon}{0}{\fracvarpm {f}{x}{\beta}} \epnt       	   
       	\end{align}
       	%where  $\epsilon >0$ and  $0 < \beta \leq 1 $ are real parameters and $f(x)$ is function.	
       	A function for which at least one of \fdiffpm {f}{x}{\beta} exists finitely will be called $\beta$-differentiable at the point \textit{x}.
       \end{definition}
       %%%%%%%%%%%%%%%%%%%%%%%%%%%%%%%%%%%%%  
       In the above definition we do not require upfront equality of left and right $\beta$-velocities. 
       This amounts to not demanding  continuity of the   $\beta$-velocities in advance. 
       Instead, continuity is a property, which is fulfilled under certain conditions.   
     
 	 %%%%%%%%%%%%%%%%%%%%%%%%%%%%%%%%% 
	\begin{condition}[H\"older growth condition]
		For  given  $x$ and $0< \beta \leq 1$ 
		\begin{equation}\label{C1} 
			\mathrm{osc}_{\epsilon }^{\pm} f (x)  \leq C \epsilon^\beta \tag{C1}
		\end{equation}
		for some  $C \geq 0$ and $\epsilon > 0$.    
	\end{condition}
	%%%%%%%%%%%%%%%%%%%%%%%%%%%%%
	\begin{condition}[H\"older oscillation condition]
		For  given  $x$, $0< \beta \leq 1$ and $\epsilon>0$ 	
		\begin{equation}\label{C2}
			\mathrm{osc}^{\pm} \fracvarpm {f}{x}{\beta} =0 \epnt \tag{C2}
		\end{equation}
	\end{condition}
	%%%%%%%%%%%%%%%%%%%%
     
     %%%%%%%%%%%%%%%%%
     %  Theorem
     %%%%%%%%%%%%%%%%%%
     \begin{theorem}[Conditions for existence of $\beta$-velocity]\label{th:aexit}
     	For each $\beta > 0$ if \fdiffplus {f}{x}{\beta} exists (finitely), then $f$ is right-H\"older continuous of order $\beta$ at $x$ and \ref{C1} holds, and the analogous result holds for \fdiffmin {f}{x}{\beta} and left-H\"older continuity.
     	
     	Conversely, if \ref{C2} holds then $\fdiffpm {f}{x}{\beta}$ exists finitely.
     	Moreover, \ref{C2} implies \ref{C1}.  
     \end{theorem}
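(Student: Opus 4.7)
The argument splits naturally into three pieces: (i) existence of the one-sided $\beta$-velocity implies both one-sided $\beta$-Hölder continuity at $x$ and the growth bound \ref{C1}; (ii) the oscillation condition \ref{C2} forces finite existence of the velocity; (iii) combining the two gives \ref{C2} $\Rightarrow$ \ref{C1}. I will carry out the ``$+$'' case; the ``$-$'' case is obtained by replacing $[x,x+\epsilon]$ with $[x-\epsilon,x]$ throughout.

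\textbf{Step 1: existence $\Rightarrow$ one-sided Hölder continuity and \ref{C1}.} Assume $\fdiffplus{f}{x}{\beta}=L$ exists finitely. Applying the definition of the limit with tolerance $\eta=1$, I obtain an $h>0$ such that
\[
\left| \frac{f(x+\delta)-f(x)}{\delta^{\beta}} - L \right| < 1 \qquad \text{for all } \delta\in(0,h],
\]
which rearranges to $|f(x+\delta)-f(x)|\le (|L|+1)\,\delta^{\beta}$. This is precisely right-H\"older continuity at $x$ in the sense of Definition~\ref{def:holder}. To deduce \ref{C1}, I would fix $\epsilon\in(0,h]$ and apply the same inequality pointwise to every $y=x+\delta\in[x,x+\epsilon]$; since $\delta^{\beta}\le\epsilon^{\beta}$, both $\sup_{[x,x+\epsilon]}f - f(x)$ and $f(x)-\inf_{[x,x+\epsilon]}f$ are at most $(|L|+1)\epsilon^{\beta}$, so $\osc{f}{x}{\epsilon}{+}\le 2(|L|+1)\,\epsilon^{\beta}$, which is \ref{C1} with $C=2(|L|+1)$.

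\textbf{Step 2: \ref{C2} $\Rightarrow$ finite existence of the velocity.} I read \ref{C2} as saying that the scalar function $g(\epsilon):=\fracvarplus{f}{x}{\beta}$, viewed as a function of $\epsilon>0$, has vanishing one-sided oscillation at $0$, i.e.\ $\lim_{h\to 0^{+}}\bigl(\sup_{(0,h]}g-\inf_{(0,h]}g\bigr)=0$. This is exactly the Cauchy criterion for $\lim_{\epsilon\to 0^{+}}g(\epsilon)$: for every $\eta>0$ there is $h>0$ so that $|g(\epsilon_1)-g(\epsilon_2)|<\eta$ whenever $\epsilon_1,\epsilon_2\in(0,h]$. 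Finiteness follows because the oscillation tending to $0$ forces both the supremum and the infimum over $(0,h]$ to be finite for sufficiently small $h$ (they differ by at most $\eta$, and one of them bounds $g$ at some reference $\epsilon_{0}$ which is finite by the very definition of $g$). Therefore the limit defining $\fdiffplus{f}{x}{\beta}$ exists and is finite.

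\textbf{Step 3: \ref{C2} $\Rightarrow$ \ref{C1}} is now immediate by composing Steps~2 and~1. The only delicate point worth an explicit sentence in the final write-up is that the ``$\mathrm{osc}^{+}$'' in \ref{C2} is being applied to $g(\epsilon)$ as a function of the increment variable and localized at $\epsilon=0^{+}$; this is the interpretation that makes \ref{C2} meaningful and lets the Cauchy-criterion argument go through. I expect the only real obstacle to be pedantic care around this interpretation and around the $\pm$ symmetry; once these are fixed, each implication reduces to a routine $\epsilon$-$\delta$ estimate together with the elementary inequality $\delta^{\beta}\le\epsilon^{\beta}$ on $[0,\epsilon]$.
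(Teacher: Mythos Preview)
The paper does not actually prove this theorem in-text; it merely states ``The proof is given in \cite{Prodanov2017}'' and moves on. So there is no in-paper argument to compare against. Your proof is correct and is the natural one: Step~1 is a direct $\epsilon$--$\delta$ unpacking of the limit together with the monotonicity $\delta^{\beta}\le\epsilon^{\beta}$, Step~2 is precisely the Cauchy criterion for limits (vanishing oscillation of $g(\epsilon)$ near $0^{+}$ is equivalent to convergence), and Step~3 is the composition. Your explicit remark that \ref{C2} must be read as the oscillation of $\epsilon\mapsto\fracvarplus{f}{x}{\beta}$ at $\epsilon=0^{+}$ is well-placed, since the paper's phrasing of \ref{C2} is ambiguous on this point; that is indeed the only interpretation under which the statement is both meaningful and true.
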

     %%%%%%%%%%%%%%%%
     The proof is given in \cite{Prodanov2017}.
     
 %%%%%%%%%%%%%%%%%%%%%%%%%%
 % Theorem
 %%%%%%%%%%%%%%%%%%%%%%%%%%
 \begin{proposition}[Fractional Taylor-Lagrange property]
 	\label{th:holcomp1}
 	The existence of $\fdiffpm{f}{x}{\beta} \neq 0$  for  $\beta \leq 1$ implies that
 	\begin{equation}\label{eq:frtaylorlag}
 	f(x \pm \epsilon) = f(x) \pm \fdiffpm {f}{x}{\beta}   \epsilon^\beta + \bigoh{\epsilon^{\beta} }  \epnt
 	\end{equation}
 	While if  
 	\[
 	f(x \pm \epsilon)= f(x) \pm K \epsilon^\beta +\gamma_\epsilon \; \epsilon^\beta  
 	\] 
 	uniformly in  the interval $ x \in [x, x+ \epsilon]$ for some Cauchy sequence $\gamma_\epsilon = \bigoh{1 }$ and $K \neq 0$ is constant in $\epsilon$  then \fdiffpm {f}{x}{\beta} = K.
 \end{proposition}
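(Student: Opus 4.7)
The plan is to handle the two implications separately, as each is essentially an algebraic rearrangement of the definition of $\fdiffpm{f}{x}{\beta}$ combined with careful use of the little-$\smallO$ convention in Definition~\ref{def:bigoh}. I will treat the forward case in detail; the backward case is symmetric.

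For the direct implication, assume $\fdiffplus{f}{x}{\beta} = L$ exists finitely with $L \neq 0$. By Definition~\ref{def:frdiff} this means $\fracvarplus{f}{x}{\beta} \to L$ as $\epsilon \to 0^{+}$. I will introduce the remainder
\[
r(\epsilon) := \fracvarplus{f}{x}{\beta} - L = \frac{f(x+\epsilon) - f(x)}{\epsilon^{\beta}} - L \ecma
\]
which is by construction a Cauchy-null sequence in $\epsilon$. Multiplying through by $\epsilon^{\beta}$ gives
\[
f(x+\epsilon) - f(x) = L \epsilon^{\beta} + r(\epsilon)\, \epsilon^{\beta} \epnt
\]
Since $r(\epsilon)\, \epsilon^{\beta} / \epsilon^{\beta} = r(\epsilon) \to 0$, the product $r(\epsilon)\, \epsilon^{\beta}$ satisfies the defining condition of $\bigoh{\epsilon^{\beta}}$, which yields the Taylor--Lagrange expansion \eqref{eq:frtaylorlag}.

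For the converse, starting from the uniform expansion $f(x+\epsilon) = f(x) + K \epsilon^{\beta} + \gamma_{\epsilon}\, \epsilon^{\beta}$ with $\gamma_{\epsilon}$ a Cauchy-null sequence and $K \neq 0$ independent of $\epsilon$, I subtract $f(x)$ and divide by $\epsilon^{\beta}$ to obtain
\[
\fracvarplus{f}{x}{\beta} = K + \gamma_{\epsilon} \epnt
\]
Passing to the limit $\epsilon \to 0^{+}$, the right-hand side converges to $K$ because $\gamma_{\epsilon} \to 0$ by hypothesis, and the left-hand side converges by definition to $\fdiffplus{f}{x}{\beta}$, giving $\fdiffplus{f}{x}{\beta} = K$.

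The backward velocity case is obtained verbatim by replacing $\deltaplus{f}{x}$ with $\deltamin{f}{x}$ and $\epsilon \to 0^{+}$ with the corresponding one-sided limit using $x - \epsilon$. There is no substantive obstacle: the only subtlety is making sure that the little-$\smallO$ convention in Definition~\ref{def:bigoh} is applied correctly, namely that a Cauchy-null factor multiplied by $\epsilon^{\beta}$ is precisely what is meant by $\bigoh{\epsilon^{\beta}}$, and conversely that any term of that form, when divided by $\epsilon^{\beta}$, is a null sequence admissible as the residual $\gamma_{\epsilon}$.
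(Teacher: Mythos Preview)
Your proof is correct. The paper does not actually supply its own proof of this proposition; it defers to the reference \cite{Prodanov2017}. Your argument is the natural one---both implications are algebraic rearrangements of Definition~\ref{def:frdiff} combined with Definition~\ref{def:bigoh}---and there is nothing deeper to the statement than what you have written.
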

 %%%%%%%%%%%%%%%%%%%%%%%%%%%%%%%%%%%%%%%%%%%%%
The proof is given in \cite{Prodanov2017}.

\begin{remark}
	The fractional Taylor-Lagrange property was assumed and applied to establish a fractional conservation of mass formula in \cite[Sec. 4]{Wheatcraft2008} assuming the existence of a fractional Taylor expansion according to Odibat and Shawagfeh \cite{Odibat2007}.
	These authors derived fractional Taylor series development using repeated application of Caputo's fractional derivative\cite{Odibat2007}. 
\end{remark}
	%%%%%%%%%%%%%%%%
	%  Proposition
	%%%%%%%%%%%%%%%%
	\begin{proposition}
		Suppose that $f \in \fclass{F}{E}$ in the interval $I=[x, x+ \epsilon]$.
		Then \fdiffpm{f}{x}{\beta} exists finitely for $ \beta \in [0,\quad   \min{\mathbb{E}}] $.
	\end{proposition}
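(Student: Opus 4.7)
The plan is to exploit the F-analytic series representation of $f$ on $I=[x,x+\epsilon]$ and invoke the Fractional Taylor-Lagrange property (the proposition immediately preceding). Set $\alpha_1:=\min \mathbb{E}$ and split the forward increment termwise,
\[
\deltaplus{f}{x} \;=\; \sum_{i\geq 1} c_i\bigl[(x+\epsilon+b_i)^{\alpha_i}-(x+b_i)^{\alpha_i}\bigr],
\]
according to whether $x+b_i$ vanishes (the \emph{resonant} indices $R=\{i:x+b_i=0\}$) or not. For $i\in R$ the contribution reduces exactly to $c_i\epsilon^{\alpha_i}$; for $i\notin R$ the map $y\mapsto(y+b_i)^{\alpha_i}$ is smooth on a neighbourhood of $x$, and a first-order Taylor expansion yields an increment of the form $\alpha_i c_i(x+b_i)^{\alpha_i-1}\epsilon+\bigoh{\epsilon}$. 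Each individual contribution is therefore of size $\bigoh{\epsilon^{\min(\alpha_i,1)}}=\bigoh{\epsilon^{\alpha_1}}$ (the degenerate case $\alpha_1>1$ reduces to ordinary differentiability).

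Second, I would divide by $\epsilon^\beta$ with $0\leq\beta\leq\alpha_1$ and identify the limit termwise. A resonant summand becomes $c_i\epsilon^{\alpha_i-\beta}$, which tends to $c_i$ if $\alpha_i=\beta$ and to $0$ otherwise; a non-resonant summand becomes $\alpha_i c_i(x+b_i)^{\alpha_i-1}\epsilon^{1-\beta}+\bigoh{\epsilon^{1-\beta}}\to 0$ since $\beta\leq\alpha_1\leq 1$. The candidate value is the finite sum $K:=\sum_{i\in R,\,\alpha_i=\beta}c_i$ (finite because the exponents in $\mathbb{E}$ are distinct). If $K\neq 0$ the total increment has the shape $K\epsilon^\beta+\gamma_\epsilon\epsilon^\beta$ with $\gamma_\epsilon\to 0$, and the Fractional Taylor-Lagrange property yields $\fdiffplus{f}{x}{\beta}=K$; if $K=0$ the fractional difference quotient is itself a Cauchy-null sequence and the limit is $0$ directly from the definition.

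The main obstacle is legitimising the exchange of the termwise limit with the infinite summation. I would address it by constructing an $\epsilon$-uniform summable majorant, using the convergence of the defining series on $I$ together with two elementary power-function estimates: $|(a+h)^\alpha-a^\alpha|\leq |h|^\alpha$ for $0<\alpha\leq 1$ and $a,h\geq 0$, and a mean-value bound $|(a+h)^\alpha-a^\alpha|\leq \alpha M^{\alpha-1}|h|$ for $\alpha>1$ with $M=\max(|a|,|a+h|)$. These bounds show that each summand of $\deltaplus{f}{x}/\epsilon^\beta$ is dominated, uniformly for small $\epsilon$, by a term of a convergent series extracted from the F-analytic representation, so a dominated-convergence-style argument legitimises the termwise limit. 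The backward case $\fdiffmin{f}{x}{\beta}$ is treated by the symmetric argument, replacing $\epsilon$ by $-\epsilon$ inside the increments, completing the two-sided conclusion.
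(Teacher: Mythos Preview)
Your approach is essentially the same as the paper's: both invoke the Fractional Taylor--Lagrange property (Prop.~\ref{th:holcomp1}) to isolate the leading exponent $\alpha_1=\min\mathbb{E}$ and treat the remaining terms as $\bigoh{\epsilon^{\alpha_1}}$. The paper's proof is a three-line sketch that implicitly takes the series centred at the point of interest (effectively all $b_i=-x$, so every term is ``resonant'' in your terminology) and only writes out the case $\beta=\alpha_1$, reading off $\fdiffplus{f}{x}{\alpha_1}=c_1$ and $\fdiffmin{f}{x}{\alpha_1}=0$. Your resonant/non-resonant split is a genuine refinement that accommodates general shifts $b_i$, and your concern about exchanging the limit with the infinite summation is legitimate but simply not raised in the paper. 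One minor slip: your claim that the non-resonant contribution $\alpha_i c_i(x+b_i)^{\alpha_i-1}\epsilon^{1-\beta}\to 0$ fails at the edge case $\beta=1$; there the non-resonant terms contribute to the (ordinary) derivative rather than vanishing, so the formula for $K$ needs adjustment in that boundary case.
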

	%%%%%%%%%%%%%%%%%%%%%%%%% 
	\begin{proof}
		The proof follows directly from Prop. \ref{th:holcomp1} observing that
		\[
		\sum_{\alpha_j \in E \setminus \{\alpha_1 \}} a_j (x - b_j) = \bigoh{|x - b_j|^{\alpha_1} }
		\]
		so that using the notation in  Definition \ref{def:fanalytic}
		\[
		\fdiffplus{f}{x}{\alpha_1} = a_1, \quad \fdiffmin{f}{x}{\alpha_1} = 0
		\]
	\end{proof}
	%%%%%%%%%%%%%%%%%%%%%%%%%%%%%% 
	Therefore, this proposition allows for characterization of an F-analytic function up to the leading fractional order
	in terms of its $\alpha$-differentiability.
	\begin{remark}
		 From the proof of the proposition  one can also see the fundamental asymmetry between the forward and  backward fractional velocities.  
		 A way to combine this is to define a complex mapping
		 \[
		 \fdiffs{\beta}{C} f (x): = \fdiffplus{f}{x}{\beta} + \fdiffmin{f}{x}{\beta} \pm i \left( \fdiffplus{f}{x}{\beta} - \fdiffmin{f}{x}{\beta} \right) 
		 \]
		 which is related to the approach taken by Nottale \cite{Nottale2010} by using complexified velocity operators.
		 However, such unified treatment will not be pursued in this work.
	\end{remark}
	%%%%%%%%%%%%%%%%%%%%%
	   
     %%%%%%%%%%%%%%
     %  Section
     %%%%%%%%%%%%%%
     \section{Characterization of singular functions}
     \label{sec:sing}
     
    %%%%%%%%%%%%%%%%%
	%  Section
	%%%%%%%%%%%%%%%%%
	\subsection{Scale embedding of fractional velocities}
	\label{sec:scfracvel}
	
	As demonstrated previously, the fractional velocity has only "few" non-zero values \cite{Chen2010, Prodanov2017}. 
	Therefore, it is useful to discriminate the set of arguments where the fractional velocity does not vanish.
	
	%%%%%%%%%%%%%
	% Definition
	%%%%%%%%%%%%%
	\begin{definition}
		The set of points where the fractional velocity exists finitely and $\fdiffpm{f}{x}{\beta}  \neq 0 $ will be denoted as the \textbf{set of change}
		$ 
		\soc{\pm}{\beta} (f): = \left\lbrace x: \fdiffpm{f}{x}{\beta}  \neq 0\right\rbrace 
		$.
	\end{definition}  
	
	Since the set of change $\soc{+}{\alpha} (f)$ is totally disconnected \cite{Prodanov2017} some of the useful properties of ordinary derivatives, notably the continuity and the semi-group composition property, are lost. 
	Therefore, if we wish to retain the continuity of description we need to pursue a different approach, which should be equivalent in limit.  
	Moreover, we can treat the fractional order of differentiability (which coincides with the point-wise H\"older exponent, that is condition \ref{C1}) as a parameter to be determined from the functional expression.

	One can define two types of \textbf{scale-dependent operators}  for a wide variety if  physical signals.
	An extreme case of such signals are the singular functions, defined as
	%%%%%%%%%%%%%%%
	%  Definition
	%%%%%%%%%%%%%%%%
	\begin{definition}
		\label{def:singular}
		A function $f(x)$ is called \underline{singular} on the interval $x \in[a,b]$, if it is i) non-constant, 
		ii) continuous;
		iii) $f^\prime(x)=0$ Lebesgue almost everywhere (i.e. the set of non-differentiability of $f$ is of measure 0) and 
		iv) $f(a) \neq f(b)$.
	\end{definition}
	%%%%%%%%%%%%%%%
	
	Since for a singular signal the derivative either vanishes or it diverges then the rate of change for such signals cannot be characterized in terms of derivatives. 
	One could apply to such signals  either the \textit{fractal variation operators}
	of certain order or the difference quotients as Nottale does and avoid taking the limit. 
	Alternately, as will be demonstrated further, the scale embedding approach can be used to reach the same goal. 
	
	Singular functions can arise as point-wise limits of continuously differentiable ones.
	Since the fractional velocity of a continuously-differentiable function vanishes we are lead to postpone taking the limit and only apply L'H\^ospital's rule, which under the hypotheses detailed further will reach the same limit as $\epsilon \rightarrow 0$.
	Therefore, we are set to introduce another pair of operators which in limit are equivalent to the fractional velocities 
	notably these are the left (resp. right) \textit{scale velocity} operators:
	\begin{flalign}
	\svarpm{f}{x}{\beta} := \frac{\epsilon^{\beta} }{ \left\lbrace \beta\right\rbrace_1  } 
	\frac{\partial}{\partial \epsilon } f (x \pm \epsilon)  
	\end{flalign}
	where $ \left\lbrace  \beta \right\rbrace_1 \equiv 1 - \beta \ \mathrm{mod} \ 1  $.
	 %represents the fractional part taken for consistency with the case when $\beta=1$.
	The $\epsilon$ parameter, which is not necessarily small, represents the scale of observation.
	
	The equivalence in limit is guaranteed by the following result:
	%%%%%%%%%%%%%%
	%  Proposition
	%%%%%%%%%%%%%%
	\begin{proposition}
		\label{prop:scaledif}
		Let $f^\prime(x)$ be continuous and non-vanishing  in $(x, x \pm \mu)$. 
		Then
		\[
		\lim\limits_{\epsilon \rightarrow 0 } \fracvarpm {f}{x}{1-\beta} =  \lim\limits_{\epsilon \rightarrow 0 } \svarpm {f}{x}{\beta}
		\]
		if one of the limits exits.	
	\end{proposition}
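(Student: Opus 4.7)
The plan is to recognize the left-hand limit as a classical $0/0$ indeterminate form and apply L'Hospital's rule on a one-sided neighborhood of $x$. First I specialize to $\beta \in (0,1)$, where $\{\beta\}_1 = 1-\beta$, so the scale velocity reduces to $\svarplus{f}{x}{\beta} = \tfrac{\epsilon^{\beta}}{1-\beta}\, f'(x+\epsilon)$, while the fractional variation of order $1-\beta$ reads $\fracvarplus{f}{x}{1-\beta} = \bigl(f(x+\epsilon)-f(x)\bigr)\big/\epsilon^{1-\beta}$.

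Second, I verify the hypotheses of L'Hospital on the one-sided interval $(0,\mu)$. Since $1-\beta > 0$, the denominator $\epsilon^{1-\beta}$ tends to $0$; continuity of $f'$ on $(x, x+\mu)$ forces $f$ to be right-continuous at $x$, so the numerator $f(x+\epsilon)-f(x)$ tends to $0$ as well. The derivative of the denominator $(1-\beta)\epsilon^{-\beta}$ is strictly positive on $(0,\mu)$, and the hypothesis that $f'$ is continuous and non-vanishing turns the quotient of derivatives $\tfrac{\epsilon^{\beta}f'(x+\epsilon)}{1-\beta}$ into a continuous, unambiguously defined function of $\epsilon$ on $(0,\mu)$ with constant sign.

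Third, I invoke one-sided L'Hospital's rule to conclude
\[
\lim_{\epsilon \to 0^+} \fracvarplus{f}{x}{1-\beta}
 = \lim_{\epsilon \to 0^+} \frac{f'(x+\epsilon)}{(1-\beta)\epsilon^{-\beta}}
 = \lim_{\epsilon \to 0^+} \svarplus{f}{x}{\beta},
\]
which covers the direction ``scale velocity limit exists $\Rightarrow$ fractional variation limit exists and agrees.'' For the converse direction, I would use the Cauchy mean-value theorem to write $\fracvarplus{f}{x}{1-\beta} = \tfrac{\epsilon^{\beta} f'(x+\theta_{\epsilon}\epsilon)}{1-\beta}$ for some $\theta_{\epsilon}\in(0,1)$; the continuity and non-vanishing of $f'$ on a right neighborhood of $x$ then let me replace $f'(x+\theta_{\epsilon}\epsilon)$ by $f'(x+\epsilon)$ in the limit, transferring existence of the $\fracvarplus$-limit back to the $\svarplus$-limit. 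The backward case $\svarmin$ is obtained by a mirror-image argument, the chain-rule sign from differentiating $f(x-\epsilon)$ in $\epsilon$ being absorbed into the convention of the definition of $\svarpm$.

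The main obstacle is the bidirectional ``if one of the limits exists'' formulation, since L'Hospital's rule is natively one-directional. The role of the non-vanishing of $f'$ is precisely to exclude the oscillatory pathologies in $f'$ near $x$ that would otherwise obstruct the converse implication; with this hypothesis the mean-value-theorem substitution above is legitimate and closes the equivalence.
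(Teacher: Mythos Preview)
Your forward direction is exactly the paper's intended argument: the paragraph preceding the proposition says explicitly that one should ``apply L'H\^ospital's rule, which under the hypotheses detailed further will reach the same limit as $\epsilon \rightarrow 0$,'' and the proof referenced in \cite{Prodanov2016b} is precisely this computation. So for the implication ``$\lim \svarplus{f}{x}{\beta}$ exists $\Rightarrow$ $\lim \fracvarplus{f}{x}{1-\beta}$ exists and agrees'' you are done and in line with the paper.

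Your converse argument, however, has a genuine gap. First, the Cauchy mean-value formula is misstated: applying it to $g(t)=f(x+t)-f(x)$ and $h(t)=t^{1-\beta}$ on $[0,\epsilon]$ gives
\[
\fracvarplus{f}{x}{1-\beta}=\frac{c^{\beta}\,f'(x+c)}{1-\beta},\qquad c=\theta_\epsilon\epsilon\in(0,\epsilon),
\]
so a factor $\theta_\epsilon^{\beta}$ is missing from your expression. Second, and more seriously, continuity and non-vanishing of $f'$ on the \emph{open} interval $(x,x+\mu)$ do not let you replace $f'(x+\theta_\epsilon\epsilon)$ by $f'(x+\epsilon)$ in the limit, because $f'$ need not have any limiting behavior at $x$. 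Concretely, take $\beta\in(0,1)$ and $f'(x+t)=t^{-\beta}\bigl(2+\sin(1/t)\bigr)$ on $(0,\mu)$: this is continuous and strictly positive there, one computes $\fracvarplus{f}{x}{1-\beta}\to 2/(1-\beta)$, yet $\svarplus{f}{x}{\beta}=(2+\sin(1/\epsilon))/(1-\beta)$ has no limit. Thus the non-vanishing hypothesis does \emph{not} exclude the oscillatory pathology you hoped to rule out.

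The upshot is that the statement, read literally as a biconditional, is too strong; the clause ``if one of the limits exists'' should be read in the L'H\^ospital direction (existence of the scale-velocity limit implies existence and equality of the fractional-variation limit), which is all the paper ever uses downstream.
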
	
	%%%%%%%%%%%%%%%%%%%%%%%%%%
	The proof is given in \cite{Prodanov2016b} and will not be repeated.
	In this formulation  the value of $1-\beta$ can be considered as the magnitude of deviation from  linearity (or differentiability) of the signal at that point.

	\begin{theorem}[Scale velocity fixed point]
	\label{th:scalerec}
	Suppose that $f  \in BVC[x, x+ \epsilon]$ and $f^\prime $ does not vanish in $[x, x+ \epsilon] $.
	Suppose that $\phi \in \fclass{C}{1}$ is a contraction map. % with a Lipschitz constant $q$.
	Let $f_n (x) := \underbrace{  \phi  \circ \ldots \phi}_{n} \circ f(x) $ be the $n$-fold composition and 
	$F(x) := \llim{n}{\infty}{f_n}(x) $.
	Then the following commuting diagram holds:
	
	%%%%%%%%%%% 
	% Tikz
	%%%%%%%%%%%
	\begin{center}
		\begin{tikzpicture}\matrix (m) [matrix of math nodes, row sep=3em, column sep=5em, minimum width=2em, minimum height=2em, nodes={asymmetrical rectangle}]
		{
			f_n (x) &  \svarpm{f_n}{x}{1-\beta} \\
			F (x)		&  \fdiffpm{F}{x}{\beta}  \\
		};	
		\path[  -> ]
		(m-1-1) edge node[above] {$ \svars{1-\beta}{\epsilon \pm } $} 
		(m-1-2)
		edge node [right]{$\lim_{n \rightarrow \infty}  $ } (m-2-1);
		\path[  -> ]
		(m-1-2)  edge node[right] {$\lim_{n \rightarrow \infty}  $ } node[left]{$\epsilon_n$} (m-2-2);
		\path[  -> ]
		(m-2-1)  edge node[above] {$\fdiffs{\beta}{\pm} $ }   (m-2-2);
		\end{tikzpicture}
	\end{center} 
	The limit in $n$ is taken point-wise. 	 
	\end{theorem}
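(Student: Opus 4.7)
The plan is to combine Proposition \ref{prop:scaledif}, which equates the limits of fractional variation and scale velocity for smooth non-degenerate functions, with the pointwise convergence $f_n \to F$ through a diagonal sequence $\epsilon_n \to 0$ adapted to the contraction ratio of $\phi$. The two sides of the diagram are separated by an interchange of limits that is not a priori legal, so the content of the theorem is really the identification of the correct linked sequence along which commutation holds.

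First I would verify that each iterate $f_n$ is $C^1$ with non-vanishing derivative on the relevant interval. By the chain rule $f_n' = (\phi'\circ f_{n-1})\cdot f_{n-1}'$, so provided the orbit avoids critical points of $\phi$, Proposition \ref{prop:scaledif} applies to each $f_n$ and yields
\[
\lim\limits_{\epsilon\to 0}\svarpm{f_n}{x}{1-\beta} = \lim\limits_{\epsilon\to 0}\fracvarpm{f_n}{x}{\beta}
\]
for every fixed $n$. Since $f_n$ is differentiable and $\beta<1$, both sides vanish, which shows that the diagram cannot close by taking the two limits separately: it must close along a linked sequence $(n,\epsilon_n)$, exactly as the label $\epsilon_n$ on the right-hand vertical arrow encodes.

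The intermediate step is to connect the top-right corner to the bottom-right corner. For every fixed $\epsilon$, pointwise convergence $f_n(x\pm\epsilon)\to F(x\pm\epsilon)$ gives $\fracvarpm{f_n}{x}{\beta}\to\fracvarpm{F}{x}{\beta}$, and the Fractional Taylor--Lagrange property (Proposition \ref{th:holcomp1}) then ensures $\fracvarpm{F}{x}{\beta}\to\fdiffpm{F}{x}{\beta}$ as $\epsilon\to 0$. A standard diagonal extraction therefore produces a sequence $\epsilon_n\to 0$ along which $\fracvarpm{f_n}{x}{\beta}\big|_{\epsilon=\epsilon_n}\to\fdiffpm{F}{x}{\beta}$. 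The natural candidate is $\epsilon_n\asymp L^n$, where $L<1$ is the Lipschitz constant of $\phi$, because this is the intrinsic scale at which the $n$-th iterate already resolves the fractal structure of $F$.

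The main obstacle is showing that the scale velocity and the fractional variation remain close along the same diagonal, i.e.\ that $\svarpm{f_n}{x}{1-\beta}\big|_{\epsilon=\epsilon_n} - \fracvarpm{f_n}{x}{\beta}\big|_{\epsilon=\epsilon_n}\to 0$. Applying L'H\^ospital's rule to $\deltaop{f_n}{x}{\epsilon_n}{\pm}/\epsilon_n^{\beta}$ and using the BVC hypothesis on $f$, the discrepancy can be bounded by the oscillation of $f_n'$ on $[x, x+\epsilon_n]$; the contraction property of $\phi$ makes this oscillation comparable to $L^n$. A sufficiently slow choice of $\epsilon_n \to 0$ then keeps the error subdominant. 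In effect, Proposition \ref{prop:scaledif} has to be upgraded from a pointwise identity of limits into an approximation uniform in $n$ along the chosen diagonal, and this is where the BVC regularity of $f$ and the $C^1$ contraction property of $\phi$ are both needed.
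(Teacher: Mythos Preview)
Your overall architecture matches the paper's: both proofs first use the chain rule (equivalently Proposition~\ref{prop:scaledif}) to relate $\svarpm{f_n}{x}{1-\beta}$ and $\fracvarpm{f_n}{x}{\beta}$ for each fixed $n$, and both then pass to the limit $F$ along a diagonal $\epsilon_n$ governed by the contraction ratio. Your identification of $\epsilon_n\asymp L^n$ is exactly what the paper does; it sets $\epsilon=q^n/\lambda$ with $q$ the Lipschitz constant of $\phi$.

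The substantive difference is in how the diagonal convergence is obtained. You propose an abstract diagonal extraction followed by a separate control of $\svarpm{f_n}{x}{1-\beta}\big|_{\epsilon_n}-\fracvarpm{f_n}{x}{\beta}\big|_{\epsilon_n}$ via the oscillation of $f_n'$. The paper is more direct and avoids this second step entirely: it invokes the Banach fixed-point error estimate
\[
|F(y)-f_n(y)|\leq \frac{q^n}{1-q}\,|\phi\circ f(y)-f(y)|
\]
at $y=x$ and $y=x+\epsilon$, combines them by the triangle inequality into $|\deltaplus{F}{x}-\deltaplus{f_n}{x}|\leq q^n L$, and divides by $\epsilon^\beta$. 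With $\epsilon=q^n/\lambda$ this gives $|\fracvarplus{F}{x}{\beta}-\fracvarplus{f_n}{x}{\beta}|\leq q^{n(1-\beta)}L\lambda^\beta\to 0$, which is the entire right-column arrow in one stroke. No control of $f_n'$ or its oscillation is needed, only function values; the BVC hypothesis is used earlier, to justify the basic evaluation formula for $\fdiffplus{f}{x}{\beta}$ in terms of $f'$.

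Your route via $\mathrm{osc}\,f_n'$ could in principle be made to work, but it is heavier: the product $f_n'=\prod_k(\phi'\circ f_k)\cdot f'$ brings in second-order information about $\phi$ when you try to bound its oscillation, whereas the paper needs only the first-order Lipschitz bound. The paper's argument also makes the choice of $\epsilon_n$ explicit rather than existential, which is what the subsequent ``scale-regularizing sequence'' corollary exploits. Conversely, your last paragraph does raise a point the paper leaves implicit: the diagram's right arrow is stated for the \emph{scale velocity} $\svarpm{f_n}{x}{1-\beta}$, but the paper's estimate is written for the \emph{fractional variation} $\fracvarpm{f_n}{x}{\beta}$; the identification between the two along $\epsilon_n$ is only established in the follow-up corollary, not in the proof of the theorem itself.
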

	%%%%%%%%%%%%%%%%
	\begin{proof}
	The proof follows by induction.
	Only the right case will be proven. The left case follows by reflection of the function argument.
	Consider an arbitrary $n$ and an interval $I=[x, \, x+\epsilon]$.
	By differentiability of the map $\phi$
	\[
	\fdiffplus{f_n}{x}{\beta} = \left( \frac{\partial \phi}{\partial f} \right) ^n \fdiffplus{f}{x}{\beta} 
	\]
	Then by hypothesis $f^{\prime} (x)$ exists finitely  a.e. in \textit{I} so that
	\[
	\fdiffplus{f}{x}{\beta} = \frac{1}{\beta}\llim{\epsilon}{0}{ \epsilon^{1-\beta}  f^{\prime} (x+ \epsilon)}
	\]
	so that
	\[
	\fdiffplus{f_n}{x}{\beta} = \frac{1}{\beta} \left( \frac{\partial \phi}{\partial f} \right) ^n \llim{\epsilon}{0}{ \epsilon^{1-\beta}  f^{\prime} (x+ \epsilon)}
	\]
	On the other hand
	\[
	\svarplus{f_n}{x}{1- \beta} =  \frac{1}{\beta} \left( \frac{\partial \phi}{\partial f} \right) ^n   \epsilon^{1-\beta}  f^{\prime} (x+ \epsilon)
	\]
	Therefore, if the RHS limits exist they are the same. 
	Therefore, the equality is asserted for all \textit{n}.
	
	Suppose that \fdiffplus{F}{x}{\beta} exists finitely.
	
	Since $\phi$ is a contraction and $F(x)$ is its fixed point by Banach fixed point theorem there is a   Lipschitz constant $q<1$ such that
	\begin{flalign*}
		\underbrace{|F(x+ \epsilon) - f_n (x+ \epsilon) |}_{A} & \leq \frac{q^n}{1-q} | \phi \circ f  (x+ \epsilon) - f  (x+\epsilon) | \\
		\underbrace{|F(x) - f_n (x) |}_{B} & \leq \frac{q^n}{1-q} | \phi \circ f  (x) - f  (x) | \\
	\end{flalign*}
	Then  by the triangle inequality
	\[
	| \deltaplus{F}{x} -  \deltaplus{f_n}{x} | \leq A + B \leq \frac{q^n}{1-q} \left(  | \phi \circ f  (x+ \epsilon) - f  (x+\epsilon) | + | \phi \circ f  (x) - f  (x) | \right) \leq q^n L
	\]
	for some $L$.
	Then
	\[
	| \fracvarplus{F}{x}{\beta} - \fracvarplus{f_n}{x}{\beta} | \leq \frac{q^n L}{\epsilon^\beta}
	\]
	We evaluate $\epsilon =   q^n / \lambda$ for some $\lambda \geq 1$ so that
	\[
	| \fracvarplus{F}{x}{\beta} - \fracvarplus{f_n}{x}{\beta} | \leq  q^{n (1-\beta)} L \lambda^\beta  =: \mu_n
	\]
	Therefore, in limit RHS $ \llim {n} { \infty} {\mu_n}=0$.
	Therefore,
	\[
	\llim{n}{\infty}{ \fracvarplus{f_n}{x}{\beta}  } =\fdiffplus{F}{x}{\beta} \quad | \quad \epsilon_n \rightarrow 0
	\]
	\end{proof}
	%%%%%%%%%%%%%%%%%%%%%%%%%%%%%%5
	So stated, the theorem holds also for sets of maps $\phi_k $ acting in sequence as they can be identified with an action of a single map.
	%%%%%%%%%%%%%%%%
	%  Cor
	%%%%%%%%%%%%%%%%
	\begin{corollary}
	Let $\Phi= \{\phi_k  \} $, where the domains of $\phi_k $ are disjoint and the hypotheses of  Theorem \ref{th:scalerec} hold.
	Then Theorem \ref{th:scalerec} holds for $\Phi$.
	\end{corollary}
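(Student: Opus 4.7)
The plan is to follow the remark immediately preceding the corollary: since the $Dom[\phi_k]$ are pairwise disjoint, the family $\Phi$ can be identified with a single piecewise map $\hat{\phi}$ defined by $\hat{\phi}(y) := \phi_k(y)$ whenever $y \in Dom[\phi_k]$, and then to invoke Theorem \ref{th:scalerec} for $\hat{\phi}$. First I would verify that $\hat{\phi} \in \fclass{C}{1}$ in a neighborhood of every point of $\bigcup_k Dom[\phi_k]$: since the domains are disjoint, each such point has a neighborhood on which $\hat{\phi}$ coincides with a single $\phi_k$, so local $C^1$ regularity and the pointwise evaluation $\partial \hat{\phi} / \partial f = \partial \phi_k / \partial f$ are inherited directly.

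Next I would promote the per-piece contraction constants $q_k < 1$ to a common rate $q := \sup_k q_k < 1$, interpreting this uniformity as part of what ``the hypotheses of Theorem \ref{th:scalerec} hold'' should mean for a family. With this in hand the proof of Theorem \ref{th:scalerec} transfers almost verbatim: the inductive identity
\[
\fdiffplus{f_n}{x}{\beta} = \left( \frac{\partial \hat{\phi}}{\partial f} \right)^n \fdiffplus{f}{x}{\beta}
\]
uses only the local derivative of whichever $\phi_k$ is active at $f_{n-1}(x)$; the scale choice $\epsilon = q^n / \lambda$ and the error bound $\mu_n = q^{n(1-\beta)} L \lambda^\beta \to 0$ go through unchanged with $L$ now a uniform bound over the pieces visited by the orbit. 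The commuting diagram thus holds for $\hat{\phi}$, which is exactly what is claimed for $\Phi$.

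The main obstacle I anticipate is justifying the Banach-type telescoping bound
\[
|F(x+\epsilon) - f_n(x+\epsilon)| \leq \frac{q^n}{1-q}\,|\hat{\phi} \circ f(x+\epsilon) - f(x+\epsilon)|
\]
across different pieces, since $\hat{\phi}$ need not be globally Lipschitz on $\bigcup_k Dom[\phi_k]$ even though each restriction is. What rescues the argument is that one only ever compares $f_{n+1}(x) = \hat{\phi}(f_n(x))$ with $f_n(x) = \hat{\phi}(f_{n-1}(x))$: the orbit is Cauchy with rate $q^n$, so after finitely many steps it remains within a single $Dom[\phi_{k^{*}}]$, and the standard single-map estimate applies from that step onward. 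The finitely many earlier cross-piece steps contribute only a uniformly bounded perturbation that is absorbed into $L$ and therefore does not affect the $n \to \infty$ limit, so the conclusion of Theorem \ref{th:scalerec} is recovered for $\Phi$.
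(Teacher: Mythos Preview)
Your approach is exactly the one the paper intends: the paper offers no proof for this corollary beyond the sentence immediately preceding it, namely that a family of maps with disjoint domains ``can be identified with an action of a single map,'' after which Theorem~\ref{th:scalerec} applies verbatim. You have simply fleshed out the details of that identification (the piecewise $\hat\phi$, the common rate $q=\sup_k q_k$, and the orbit eventually settling into a single piece), all of which the paper leaves implicit.
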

	%%%%%%%%%%%%%
	%%%%%%%%%%%%%%%%
	%  Cor
	%%%%%%%%%%%%%%%%
	\begin{corollary}
		Under the hypotheses of  Theorem \ref{th:scalerec} for $f \in \holder{\alpha}$ and $ \frac{\partial \phi}{\partial f} >1 $
		 there is a  Cauchy null sequence $ \{ \epsilon \}_k^{\infty}$, such that
		 \[
		 \llim{n}{\infty}{ \svars{\epsilon_n \pm }{1- \alpha} f_n (x)  } =\fdiffpm{f}{x}{  \alpha}  
		 \]
		 This sequence will be named \textbf{scale--regularizing} sequence.
	\end{corollary}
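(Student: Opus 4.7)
The plan is to mimic the contraction argument from the proof of Theorem~\ref{th:scalerec}, but to reverse the role of the scaling so that the selected null sequence $\{\epsilon_n\}$ now compensates for the expansive chain-rule factor $(\partial\phi/\partial f)^n > 1$ rather than for the contractive factor $q^n < 1$ used there. The construction is therefore explicitly \emph{scale-regularizing}: it trades off the blowup of the iterate derivatives against a shrinking observation scale.

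First I would reuse the chain-rule identity derived inside the proof of Theorem~\ref{th:scalerec},
\[
\svarpm{f_n}{x}{1-\alpha}\big|_{\epsilon} = \Bigl(\frac{\partial \phi}{\partial f}\Bigr)^{\!n} \svarpm{f}{x}{1-\alpha}\big|_{\epsilon},
\]
and combine it with Proposition~\ref{prop:scaledif} applied to the base function $f \in \holder{\alpha}$, which yields $\lim_{\epsilon \to 0^+}\svarpm{f}{x}{1-\alpha}\big|_{\epsilon} = \fdiffpm{f}{x}{\alpha}$. This reduces the corollary to producing a null $\{\epsilon_n\}$ along which the product $(\partial\phi/\partial f)^n\,\svarpm{f}{x}{1-\alpha}\big|_{\epsilon_n}$ converges to $\fdiffpm{f}{x}{\alpha}$. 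For each sufficiently large $n$ I would then apply the Intermediate Value Theorem to the continuous map $\epsilon \mapsto \svarpm{f_n}{x}{1-\alpha}\big|_{\epsilon}$ to pin down a scale $\epsilon_n$ satisfying the balance
\[
\svarpm{f_n}{x}{1-\alpha}\big|_{\epsilon_n} = \fdiffpm{f}{x}{\alpha} + o(1)_n .
\]
Because $(\partial\phi/\partial f)^n \to \infty$, this balance can hold only if the base scale velocity is simultaneously suppressed by the reciprocal order $(\partial\phi/\partial f)^{-n}$, which in turn forces $\epsilon_n \to 0^+$ at a geometric rate and yields the desired Cauchy null sequence; a diagonal extraction then collapses the candidates at each $n$ into a single sequence.

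The main obstacle lies in justifying this last step. Since for generic $f \in \holder{\alpha}$ the map $\epsilon \mapsto \svarpm{f}{x}{1-\alpha}\big|_{\epsilon}$ need not be monotone, solutions of the balance equation need not be unique, and one has to argue from the intermediate-value property alone that the solution set contains elements accumulating at $0$ rather than remaining trapped above some positive threshold. A further technical point is the non-constancy of the chain-rule factor along the orbit $\{f_k(x+\epsilon)\}$: strictly, $(\partial\phi/\partial f)^n$ stands for $\prod_{k=0}^{n-1}\phi'(f_k(x+\epsilon))$, and its dependence on $\epsilon$ must be controlled using the $C^1$ regularity of $\phi$ inherited from Theorem~\ref{th:scalerec}, bounding the oscillation of $\phi'$ along the convergent orbit so that the factorisation used above is valid up to an error absorbed into the $o(1)_n$ term.
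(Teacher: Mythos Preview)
Your chain-rule identity and the overall strategy of compensating the expansive factor $(\partial\phi/\partial f)^n$ by a shrinking scale are exactly the ingredients the paper uses. Where you diverge is in \emph{how} the sequence $\{\epsilon_n\}$ is produced. The paper does not invoke the Intermediate Value Theorem or any diagonal extraction at all: it simply writes down the sequence explicitly,
\[
\epsilon_n \;=\; \epsilon \,\Big/ \left|\frac{\partial\phi}{\partial f}\right|^{n/(1-\alpha)} ,
\]
so that the factor $(\partial\phi/\partial f)^n$ in $\svarpm{f_n}{x}{1-\alpha}$ is cancelled algebraically by $\epsilon_n^{\,1-\alpha}$, reducing $\svars{\epsilon_n\pm}{1-\alpha} f_n(x)$ to the base scale velocity $\svars{\epsilon_n\pm}{1-\alpha}[f](x)$. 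The hypothesis $\partial\phi/\partial f>1$ then makes $\{\epsilon_n\}$ geometrically null, and the basic evaluation formula $\lim_{\epsilon\to 0}\frac{1}{\alpha}\epsilon^{1-\alpha}f'(x\pm\epsilon)=\fdiffpm{f}{x}{\alpha}$ finishes the argument in one line.

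Your IVT route, by contrast, leaves exactly the gap you flag: without monotonicity of $\epsilon\mapsto\svarpm{f}{x}{1-\alpha}\big|_\epsilon$ there is no guarantee that solutions of your balance equation accumulate at $0$, and the subsequent diagonal extraction is doing real work that you have not justified. The explicit geometric choice sidesteps all of this because it does not ask the scale velocity to hit a prescribed value; it just rescales so that the iterate's scale velocity \emph{equals} the base function's scale velocity at a smaller scale, and then lets $n\to\infty$ drive that smaller scale to zero. Your concern about the $\epsilon$-dependence of the product $\prod_k \phi'(f_k(x+\epsilon))$ is legitimate in principle, but the paper treats $\partial\phi/\partial f$ as effectively constant along the orbit (consistent with the $C^1$ hypothesis and the contraction of $\phi$), so this point does not enter its argument either.
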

	%%%%%%%%%%%%%%%%%%%%%%%%%%
	\begin{proof}
		In the proof of the theorem it was established that
		\[
		\svarplus{f_n}{x}{1- \alpha} =  \frac{1}{\beta} \left( \frac{\partial \phi}{\partial f} \right) ^n   \epsilon^{1-\alpha}  f^{\prime} (x+ \epsilon)
		\]
		Then   we can identify
		\[
		\epsilon_n = \pm  \epsilon 
		  \left/ \left|  \frac{\partial \phi}{\partial f}  \right| ^{n/(1-\alpha) }    \right.                   
		\]
		so that
		\[
		\svarplus{f_n}{x}{1- \alpha}  = \svars{\epsilon_n }{1- \alpha} [f](x)
		\]

		Then since 	$ \frac{\partial \phi}{\partial f} >1 $ we have $ \epsilon_{n+1}< \epsilon_n <1 $. 
		Therefore, $ \{ p \, \epsilon_k\}_k$ is a Cauchy sequence for a positive  number \textit{p}.
		Further, the RHS limit evaluates to (omitting $n$ for simplicity)
		\[
		\llim{\epsilon}{0}{ \svarpm{f}{x}{1- \alpha}  } = 
		\frac{1}{\alpha}\llim{\epsilon}{0}{\epsilon^{1-\alpha} f^{\prime} (x \pm \epsilon)} = \fdiffpm{f}{x}{\alpha}
		\] 
		The backward case can be proven by identical reasoning. 
	\end{proof}
	%%%%%%%%%%%%%%%%%%%%%%%%
	Therefore, we can identify the Lipschitz constant $q$ by the properties of the contraction maps as will be demonstrated in the following examples. 
	
	%%%%%%%%%%%%%
	%  Section
	%%%%%%%%%%%%%
	\subsection{Examples}
     \label{sec:examples}
   
   		\begin{description}
   			\item[De Rham-Neidinger functions]

   	 De Rham's function is defined by the functional equations
     \[
     R_a(x):=
     \begin{cases} 
     a  R_a(2 x) ,&  0  \leq x < \frac{1}{2}  \\
     (1-a)  R_a(2 x-1) +a ,&  \frac{1}{2} \leq x \leq 1 \\
     \end{cases}  
     \]
     and boundary values $R_a(0)=0$, $R_a(1)=1$. the function is strictly increasing and singular. 
     It is also is known under several different names --
     "Lebesgue's singular function"  or "Salem's singular function". 
     This function  was also defined in different ways \cite{Cesaro1906,Faber1909}.
     Lomnicki and Ulan \cite{Lomnicki1934}, for example, give a probabilistic construction as follows.
     In a an imaginary experiment of flipping a possibly "unfair" coin with probability $a$ of heads (and 
     $1 - a$ of tails). Then
     $
     R_a(x) = \mathbb{P} \left\lbrace t \leq x \right\rbrace 
     $
     after infinitely many trials  where $t$ is the record of the trials represented by a binary sequence.  
     While Salem \cite{Salem1943} gives a geometrical construction.
     
    $r_{0}(x, a)= x^{a}$, which verifies $ r_{0}(0, a)=0$ and  $ r_{0}(1, a)=1$. 
    \[
    r_n(x,a):= \left\{
    \begin{array}{ll}
    \frac{1}{2^a}  r_{n-1}(2 \, x, a) ,&  0  \leq x < \frac{1}{2}  \\
    (1-\frac{1}{2^a})  r_{n-1}(2 \, x-1, a) + \frac{1}{2^a} ,&  \frac{1}{2} \leq x \leq 1 \\
    \end{array} \right.
    \]
    The system is a slight re-parametrization of the original De Rham's function.
	   Then
    $
    R_a(x) =\llim{n}{\infty}{r_{n}(x,a)}  
    $.
    
    Formal calculation shows that
    \[
    \fdiffs{\beta}{+} r_n(x,a):= \left\{
    \begin{array}{ll}
     2^{\beta -a} \fdiffs{\beta}{+} r_{n-1}(2 \, x, a) ,&  0  \leq x < \frac{1}{2}  \\
    (1-\frac{1}{2^a}) 2^\beta \fdiffs{\beta}{+}  r_{n-1}(2 \, x-1, a) ,&  \frac{1}{2} \leq x \leq 1 \\
    \end{array} \right.
    \]
    Therefore, for $\beta < a$ the velocity vanishes, while for $\beta > a$ it diverges.
    We further demonstrate its existence for $\beta=a$. 
    For this case
    \[
    \fdiffs{a}{+} r_n(x,a):= \left\{
    \begin{array}{ll}
     \fdiffs{a}{+} r_{n-1}(2 \, x, a) ,&  0  \leq x < \frac{1}{2}  \\
    (2^a -1 )   \fdiffs{a}{+}  r_{n-1}(2 \, x-1, a) ,&  \frac{1}{2} \leq x \leq 1 \\
    \end{array} \right.
    \]
    and $\fdiffs{a}{+} r_{0}(x,a)= \mathbf{1} (x=0) $.
    
    The same result can be reached using scaling arguments. 
   	We can discern two cases.
   	
   	Case 1, $x \leq 1/2 $ : Then application of the scale  operator leads to :
   	\[ 
   	\svarplus {r_{n}}{x,a}{1-a} = \frac{  2^{1-a} \epsilon^{1-a}  }{a}   \frac{\partial }{\partial \epsilon }r_{n-1}  (2 x+2 \epsilon, a) 
   	\]
  
   	Therefore, the factor will remain scale invariant for  $\epsilon=1/2$ and consecutively  
   	$
   	\epsilon_n =   \frac{1}{2^{n  }} 
   	$ so that  we identify a scale-regularizing Cauchy sequence so that $f(x)=x^a$ is verified and $  \fdiffs{a}{+} R_a (0 ) =1 $.
   	
   	Case 2, $x > 1/2$:  In a similar way :
   	\[ 
   	\svarplus {r_{n}}{x,a}{1-a} = \frac{2^{1-a}  \epsilon^{1-a}}{a}  \left(  2^a -1 \right)   \frac{\partial }{\partial \epsilon } r_{n-1}(2 x + 2 \epsilon -1, a)
   	\]
   	Applying the same sequence results in a factor  $2^a - 1 \leq 1$. Therefore, the resulting transformation is a contraction. 
   	
   	Finally, we observe that if $ x = \overline{0.d_1 \ldots d_n} $ then the number of occurrences of Case 2 corresponds to the amount of the digit $1$ in the binary notation, that is to 
   	$s_n = \sum\limits_{k=1}^{n} d_k $.
   	
 The calculation can be summarized in the following proposition:
 %%%%%%%%%%%%%%
 %  Proposition
 %%%%%%%%%%%%%%
 \begin{proposition}
 	\label{prop:derham1}
 	
 	Let $\mathbb{Q}_2$ denote the set of dyadic rationals. 
 	Let  $s_n = \sum\limits_{k=1}^{n} d_k $ denote the sum of the digits for the number  
 	$x =  \overline{0.d_1 \ldots d_n}, \ d \in \{0, 1 \}$ in  binary representation, 
 	then
 	\[
 	\fdiffplus{R_a}{x}{\beta}= \left\{  
 	\begin{array}{ll} \left( 2^\beta -1 \right)^{s_{n}-1}, & x \in \mathbb{Q}_2 \\
 	0, & x \notin  \mathbb{Q}_2
 	\end{array}
 	\right.
 	\]
 	for $\beta= - log_2 a$, $a \neq \dfrac{1}{2}$.
 	For $\beta < - log_2 a$ 
 	$\fdiffplus{R_a}{x}{\beta}=0$.
 \end{proposition}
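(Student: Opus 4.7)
The plan is to exploit the self-similar functional equation of $R_a$ through the scale-velocity framework of Theorem~\ref{th:scalerec}, using the iterates $r_n$ as the regularising sequence. Applying the functional equation to the forward increment on the left branch ($x + \epsilon < 1/2$) and on the right branch ($x \geq 1/2$) multiplies the scale velocity $\svarplus{r_n}{x,a}{1-\beta}$ by $2^{\beta-a}$ or $(1 - 2^{-a})\,2^\beta$, respectively. At the critical exponent $\beta = -\log_2 a$ these two factors collapse to $1$ and $2^\beta - 1$; for $\beta < -\log_2 a$ both are strictly less than $1$, so iterating the recursion yields a geometric contraction that drives the scale velocity pointwise to $0$, settling the second half of the proposition.

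For the critical exponent and a dyadic $x = \overline{0.d_1 \ldots d_n}$, I would track the orbit of $x$ under the doubling map $T x = 2 x \bmod 1$. Iterating the branch recursion $k$ times produces
\[
\svarplus{r_m}{x,a}{1-\beta} = \Bigl( \prod_{j=1}^{k} c(d_j) \Bigr)\, \svarplus{r_{m-k}}{T^{k}x,a}{1-\beta}, \qquad c(0) = 1,\ c(1) = 2^\beta - 1.
\]
Applying the corollary to Theorem~\ref{th:scalerec} with the scale-regularising sequence $\epsilon_k = 2^{-k}$ (the ratio dictated by the left-branch contraction) and letting $m \to \infty$ converts this into an identity for $\fdiffplus{R_a}{x}{\beta}$. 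Because $T^n x = 0$ for a dyadic of binary length $n$, the orbit terminates; the identity $R_a(2^{-k}) = 2^{-ak}$ makes the terminal base ratio equal to $1$ along the regularising sequence, and the accumulated product of digit factors yields the claimed power of $2^\beta - 1$.

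For non-dyadic $x$, the orbit $T^k x$ never settles, and along $\epsilon_k = 2^{-k}$ the rescaled mantissa $2^k \epsilon$ keeps wandering in $(1/2, 1]$, so the ratio $R_a(x+\epsilon_k)/\epsilon_k^\beta$ cannot stabilise; combined with the total disconnectedness of the set of change \cite{Prodanov2017}, this forces the forward $\beta$-velocity to vanish. The main obstacle I anticipate lies in the bookkeeping of the terminal step for dyadic $x$: the orbit passes through $1/2$ just before hitting $0$, and a direct evaluation at $1/2$ via the functional equation gives $\fdiffplus{R_a}{1/2}{\beta} = 2^\beta - 1$, which must be absorbed carefully into the product so that the exponent in the final formula matches $s_n - 1$ and not $s_n$ as a naive count of branch applications would suggest.
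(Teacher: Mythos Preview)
Your approach is essentially the same as the paper's: the proposition is presented there as a summary of the preceding calculation, which applies the iterative system $r_n$, reads off the branch factors $2^{\beta-a}$ and $(1-2^{-a})2^{\beta}$, specialises to the critical exponent to obtain $1$ and $2^{\beta}-1$, and then uses the scale operator with the regularising sequence $\epsilon_n=2^{-n}$ to count right-branch occurrences as $s_n$. Your write-up is in fact slightly more careful than the paper's on two points---you flag the $s_n-1$ versus $s_n$ bookkeeping at the terminal step, and you attempt an argument for non-dyadic $x$ (the paper's calculation does not address that case explicitly)---but the underlying route is identical.
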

 %%%%%%%%%%%%%%%%%%%%%%%%%%%%%%%%%%%%%
     
     Neidinger introduces a novel strictly singular function \cite{Neidinger2016}, 
      called f\textit{air-bold gambling function} and which will be referred to as Neidinger's function. 
     The function is based on De Rham's construction.
     The Neidinger's function is defined as the limit of the system
      \[
     N_n(x, a):=
     \begin{cases} a\leftarrow  1-a, & n \quad even \\
      a  N_{n-1}(2 x, a) ,&  0  \leq x < \frac{1}{2}  \\
     (1-a) \, N_{n-1}(2 x-1, a) + a ,&  \frac{1}{2} \leq x \leq 1 \\
     \end{cases}  
     \]
     where $N_a(x,0)=x$. 
     In other words the parameter $a$ alternates for every recursion step.
  
     We can exercise a similar calculation again starting from $r_0(x, a)=x^a $.
     Then
     \[
     \fdiffs{\beta}{+} r_n(x,a):= \left\{
     \begin{array}{ll}
	    a\leftarrow  1-a, & n \quad even \\
	    a 2^{\beta } \fdiffs{\beta}{+} r_{n-1}(2 \, x, a) ,&  0  \leq x < \frac{1}{2}  \\
	    (1-a) 2^\beta \fdiffs{\beta}{+}  r_{n-1}(2 \, x-1, a) ,&  \frac{1}{2} \leq x \leq 1 \\
     \end{array} \right.
     \]
     Therefore, either $a=1/2^\beta$ or $1-a = 1/2^\beta$ so that $\fdiffs{a}{+} r_{0}(x,a)= \mathbf{1} (x=0) $.
     The velocity can be computed algorithmically to arbitrary precision.  
     
     %%%%%%%%%%%%%%%%%%%
     %  Figure
     %%%%%%%%%%%%%%%%%%%
     \begin{figure}
     	\centering
     \includegraphics[width=0.7\linewidth ]{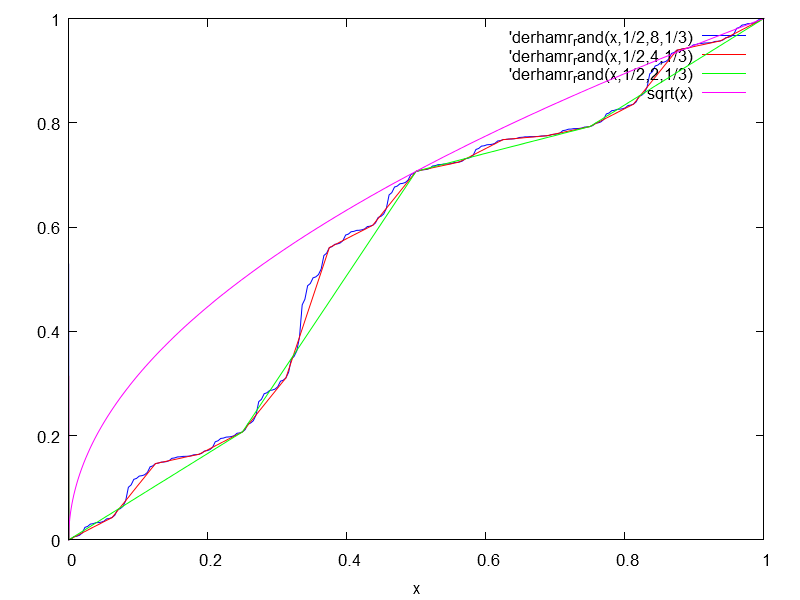}
     	\caption{Neidinger's function}
     	Recursive construction of the Neidinger's function; iteration levels 2,4,8.
     	\label{fig:derham1s}
     \end{figure}
     %%%%%%%%%%%%%%%%%%%
     %%%%%%%%%%%%%%%%%%%
     %  Figure
     %%%%%%%%%%%%%%%%%%%
     \begin{figure}
     	\centering
     	\includegraphics[width=0.7\linewidth]{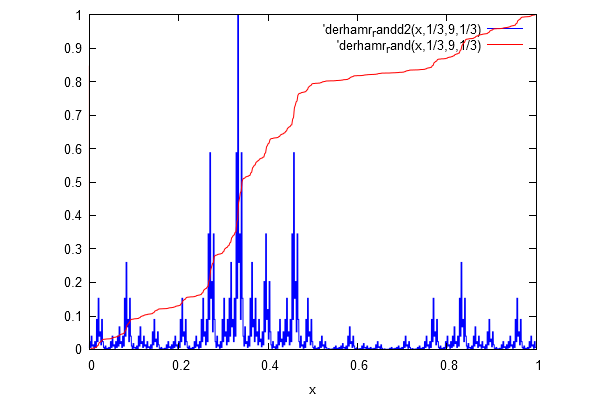} \\
     	\includegraphics[width=0.7\linewidth]{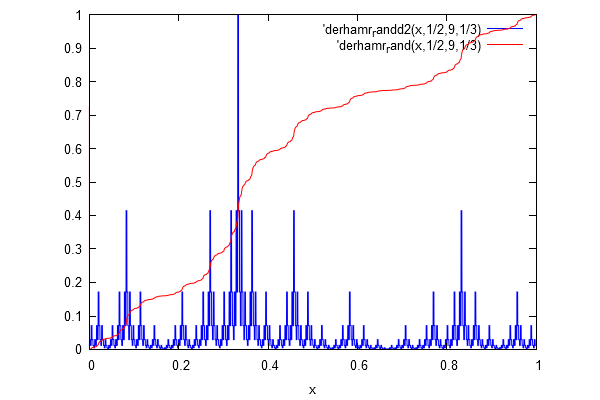}
     	\caption{Fractional velocity of Neidinger's function}	\label{fig:derham1d}
     	Recursive construction of the fractional velocity for 
     	$\beta=1/3$ and $\beta=1/3$, iteration level 9. 
     	The Neidinger's function IFS are given for comparison for the same iteration level.
     \end{figure}
    %%%%%%%%%%%%%%%%%%%%%%%%%%%%%%%%%%%%%%%

   \item[Langevin evolution]     
   Consider a non-linear problem, where the phase-space trajectory of a system is represented by a F-analytic function $x(t)$ and $t$ is a real-valued parameter, for example time or curve length.
   That is, suppose that a generalized Langevin equation holds: 
   \[
   \deltaplus{x}{t}     = a(x, t) \epsilon + B(x, t) \epsilon^\beta + \bigoh{\epsilon}, \ \beta \leq 1
   \]
   The form of the equation depends critically on the assumption of continuity of the reconstructed trajectory. 
   This in turn demands for the fluctuations of the fractional term  to be discontinuous.
   The proof technique is introduced in \cite{Prodanov2017}, while the argument is similar to the one presented in \cite{Gillespie1996}. 
   
   By hypothesis  $\exists K$, such that $|\Delta_\epsilon X| \leq K \epsilon^\beta $ and $x(t)$ is \holder{\beta}.
   Therefore, without loss of generality we can set $a=0$ and apply the argument from \cite{Prodanov2017}.
   Fix the interval $[t, t+ \epsilon]$ and choose a partition of points $\{t_k= t +k/N \epsilon \}$ for an integral $N$.
   \[
   x_{t_k} = x_{t_{k-1}} + B (x_{t_{k-1}} , t_{k-1}) \left( \epsilon/N\right) ^\beta + \bigoh{ \epsilon^\beta}
   \]
   where we have set $x_{t_k} \equiv x({t_k})$.
   Therefore,
    \[
   \Delta_\epsilon  x = \frac{1}{N^\beta}\sum\limits_{ k=0}^{N-1}  B (x_{t_{k}} , t_k) \epsilon  ^\beta + \bigoh{ \epsilon^\beta}
   \]
Therefore, if we suppose that $B$ is continuous in $x$ or $t$ after taking limit on both sides we arrive at
   \[
   \limsup_{\epsilon \rightarrow 0} \frac{\Delta_\epsilon  x}{\epsilon  ^\beta} = B (x_t, t) = 
   \frac{1}{N^\beta}\sum\limits_{ k=0}^{N-1} \limsup_{\epsilon \rightarrow 0}  B (x_{t_{k}} , t_k)  =
    N^{1-\beta} B (x_t, t) 
   \]
   so that $(1-  N^{1-\beta}) B (x, t) =0 $.
   Therefore, either $\beta=1$ or $B (x, t) =0$. So that  $B(x, t)$ must oscillate and is not continuous if $\beta <1$.     
  
   	\end{description}  
%%%%%%%%%%%%%%%%%%%%%%%%%
  
   %%%%%%%%%%%%%%
   %  Section
   %%%%%%%%%%%%%%
   \section{Characterization of Kolwankar-Gangal local fractional derivatives}
   \label{sec:KG}
   
    The overlap of the definitions of the Cherebit's fractional velocity and the Kolwankar-Gangal fractional derivative is not complete
    \cite{Adda2013}.
    Notably, Kolwankar-Gangal fractional derivatives are sensitive to the critical local H\"older exponents, while the fractional velocities are sensitive to the critical point-wise H\"older exponents and there is no complete equivalence between those quantities \cite{Kolwankar2001}.
   In this section we will characterize the local fractional derivatives in the sense of Kolwankar and Gangal using the notion of fractional velocity.

   %%%%%%%%%%%%%%%
   %  Section
   %%%%%%%%%%%
   \subsection{Fractional integrals and derivatives}
   \label{sec:fi}
   The left Riemann-Liouville differ-integral of order $\beta \geq 0$ is defined as
   \[
   \frdiffiix{\beta}{ a+ }  f (x) = 
   \dfrac{1}{\Gamma(\beta)} \int_{a}^{x}   f \left( t \right)  \left( x-t \right)^{\beta -1}dt 
   \]
   while the right integral is defined as
   \[
   \frdiffiix{\beta}{ -a }  f (x) = 
   \dfrac{1}{\Gamma(\beta)} \int_{x}^{a}   f \left( t \right)  \left( t-x \right)^{\beta -1}dt 
   \]
   where $\Gamma(x) $ is the Euler's Gamma function  (Samko et al. \cite{Samko1993} [p. 33]). 
   The left (resp. right) Riemann-Liouville (R-L) fractional derivatives are defined as the expressions (Samko et al. \cite{Samko1993}[p. 35]):
   \begin{flalign*}
   \mathcal{D}_{a+}^{\beta} f (x)  & := \frac{d}{dx} \frdiffiix{1-\beta}{ a+ }  f (x)  = \frac{1}{\Gamma(1- \beta)}  \frac{d}{dx}  \int_{a}^{x}\frac{  f (t ) }{{\left( x-t\right) }^{\beta }}dt  \\
   \mathcal{D}_{-a}^{\beta} f (x) & := -\frac{d}{dx}  \frdiffiix{1-\beta}{ -a }  f (x) = -\frac{1}{\Gamma(1- \beta)} \frac{d}{dx}  \int_{x}^{a}\frac{  f (t )  }{{\left( t-x\right) }^{\beta }}dt
   \end{flalign*}
      
   The left (resp. right) R-L derivative of a function $f$ exists for functions  representable by fractional integrals of order $\alpha$ of some Lebesgue-integrable function.
   This is the spirit of the definition of Samko et al. \cite{Samko1993}[Definition 2.3, p. 43] for the appropriate functional spaces: 

   \begin{flalign*}
%   \mathcal{I}^{\alpha}_{a, + (-)} (L^1) &:= \left\lbrace f: 
%   \frdiffiix{ \alpha}{ a+, (-a) }  f (x) \in AC([a, b]), f  \in L^1 ([a,b]) , x \in [a, b] \right\rbrace  \ecma 
   \mathcal{I}^{\alpha}_{a, +} (L^1)& := \left\lbrace f: 
   \frdiffiix{ \alpha}{ a+ }  f (x) \in AC([a, b]), f  \in L^1 ([a,b]) , x \in [a, b] \right\rbrace \ecma  \\
   \mathcal{I}^{\alpha}_{a, -} (L^1)& := \left\lbrace f: 
   \frdiffiix{ \alpha}{ -a }  f (x) \in AC([a, b]), f  \in L^1 ([a,b]) , x \in [a, b] \right\rbrace 
   \end{flalign*}
   %respectively. 
   Here $AC$ denotes absolute continuity on an interval in the conventional sense. 
   Samko et al. comment that the existence of a summable derivative $f^\prime(x)$ of a function $f( x)$ does not yet guarantee
   the restoration of $f(x)$ by the primitive in the sense of integration and go on to give references to singular functions for which
   the derivative vanishes almost everywhere and yet the function is not constant, such as for example, the De Rhams's function \cite{Rham1957}. 
%    For functions that are $AC$ on an interval $[a,b]$ integration by parts gives
%    \[
%    \mathcal{D}_{a+}^{\beta} f (x) =  \frac{f(a) }{\Gamma(1- \beta) (x - a)^\beta} + \frac{1}{\Gamma(1- \beta)} \int_{a}^{x}\frac{  f^\prime (t ) }{{\left( x-t\right) }^{\beta }}dt
%    \]
%    almost everywhere in $[a,b]$ (Samko et al. \cite{Samko1993}[Lemma 2.2, p. 35]), that is wherever $f^\prime (x)$ exists in $[a,b]$. This is exactly the representation, which fails for singular functions. 
%    Note that the second term is used by most authors for the definition of the Caputo derivatives.  
   
   To ensure restoration of the primitive by fractional integration, based on Th. 2.3 Samko et al. introduce another space of \textit{summable fractional derivatives}, for which the Fundamental Theorem of Fractional Calculus %inversion property 
   holds. 
  %%%%%%%%%%%%%%%%%%%%%%%%%%
  % Definition
  %%%%%%%%%%%%%%%%%%%%%%%%%% 
  \begin{definition}
     	\label{def:summdfderiv}
     	Define the functional spaces of summable fractional derivatives \\ (Samko et al. \cite{Samko1993}[Definition 2.4, p. 44])  as
     	$
     	E^{\alpha}_{a,\pm} ([a, b]):= \left\lbrace f: 
     	\mathcal{I}^{1-\alpha}_{a, \pm} (L^1)
     	\right\rbrace 
     	$.
  \end{definition}
   In this sense 
     \[
     \frdiffiix{ \alpha}{ a+ }  \left(   \mathcal{D}_{a+}^{\alpha} \, f  \right) (x)  = f (x) 
     \]
     for $f \in E^{\alpha}_{a, +} ([a, b])$ (Samko et al. \cite{Samko1993}[Th. 4, p. 44]).
   While 
    \[
    \mathcal{D}_{a+}^{\alpha} \left( \frdiffiix{ \alpha}{ a+ } \, f \right) (x)  = f (x) 
    \]
    for $f \in  \mathcal{I}^{\alpha}_{a, +} (L^1)$.

   So defined spaces do not coincide.
   The distinction can be seen from the following example:
   %%%%%%%%%%%%%%%%%%%%%%%
   %  Example
   %%%%%%%%%%%%%%%%%%%%%%
   \begin{example}
	Define \[
	h(x):= \begin{cases}
			0, & x \leq 0 \\
			x^{\alpha -1}, & x>0
	\end{cases}
	\]
	for $0 < \alpha < 1$.
	Then 
	$  \frdiffiix{1- \alpha}{ 0+ }  h(x) = \Gamma (\alpha) $ for $x>0$ 	so that $ \mathcal{D}_{0+}^{\alpha} \,  h (x) =0$ everywhere in \fclass{R}{}.
	
	On the other hand,
	$$ 
	\frdiffiix{ \alpha}{ 0+ }  h(x) = \frac{ \Gamma (\alpha)}{\Gamma (2 \alpha)} \ x^{2 \alpha- 1} $$ for $x>0$
	and
	$$
	\frac{ \Gamma (\alpha)}{\Gamma (2 \alpha)} \  \mathcal{D}_{0+}^{\alpha} x^{2 \alpha- 1} = x^{\alpha -1} \epnt
	$$
	Therefore, the fundamental theorem fails.
	It is easy to demonstrate that $h(x)$ is not $AC$ on any interval involving $0$.
   \end{example}
   %%%%%%%%%%%%%%%%%%%%%%%%%%
   Therefore, the there is an inclusion $  E^{\alpha}_{a, +} \subset \mathcal{I}^{\alpha}_{a, +} $.
   %%%%%%%%%%%%%%%
   %  Section
   %%%%%%%%%%%
   \subsection{The local(ized) fractional derivative}
   \label{sec:lfd}
   
   The definition of \textsl{local fractional derivative} (LFD)  introduced by Kolwankar and Gangal \cite{Kolwankar1997} is based on the localization of Riemann-Liouville fractional derivatives towards a particular point of interest in a way similar to Caputo.   
   %%%%%%%%%%%%%%%
   %  Definition
   %%%%%%%%%%%%%%%
   \begin{definition}
   	\label{def:kg-lfd}
   	Define left LFD as
   	\[ 
   	\mathcal{D}_{KG+}^{\beta}  f(x) := \llim{x}{a}{} \mathcal{D}_{a+}^{\beta} \left[  f-f(a) \right] (x) 
   	% = \dfrac{1}{\Gamma(1- \beta)} \llim{x}{a}{} \frac{d}{dx}  \int_{a}^{x}\frac{  f (t ) -f(a) }{{\left( x-t\right) }^{\beta }}dt    \epnt
   	\]
   	and right LFD as
   	\[ 
   	\mathcal{D}_{KG-}^{\beta}  f(x) :=  \llim{x}{a}{} \mathcal{D}_{-a}^{\beta}  \left[  f(a) - f  \right] (x) \epnt % = -\dfrac{1}{\Gamma(1- \beta)} \llim{x}{a}{} \frac{d}{dx}  \int_{x}^{a}\frac{  f (t ) -f(a) }{{\left( t-x\right) }^{\beta }}dt    \epnt
   	\] 
   \end{definition}
   %%%%%%%%%%%%%%%%%%
   \begin{remark}
   The seminal publication defined only the left derivative. 
   Note that the LFD is more restrictive than the R-L derivative  because the latter may not have a limit as $x \rightarrow a$.	
   \end{remark}
   %%%%%%%%%%%%%%%%%%%
   
   Ben Adda and Cresson \cite{Adda2001} and later Chen et al. \cite{Chen2010} claimed that the Kolwankar -- Gangal  definition of local fractional derivative is equivalent to Cherbit's definition for certain classes of functions.
   On the other hand, some inaccuracies can be identified in these articles \cite{Chen2010, Adda2013}. 
   Since the results of Chen et al. \cite{Chen2010} and Ben Adda-Cresson  \cite{ Adda2013} are proven under different hypotheses and notations I feel that separate proofs of the equivalence results using the theory established so-far are in order.
   
   %%%%%%%%%%%%%%%%%%
   %  Prop
   %%%%%%%%%%%%%%%%%
   \begin{proposition}[LFD equivalence]
   	\label{prop:ldfeq}
   	Let $f(x)$ be $\beta$-differentiable about $x$. Then $\mathcal{D}_{KG, \pm }^{\beta} f(x)$ exists and
   	\[
   	\mathcal{D}_{KG, \pm }^{\beta} f(x)  = \Gamma(1+\beta) \ \fdiffpm {f}{x}{\beta} \epnt
   	\]
   \end{proposition}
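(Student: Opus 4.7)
\emph{Proof plan.} My plan is to substitute the fractional Taylor-Lagrange expansion of Proposition \ref{th:holcomp1} into the Riemann-Liouville integral appearing in the LFD and exploit linearity. Setting $K := \fdiffplus{f}{a}{\beta}$ and $g(t) := f(t) - f(a)$, Proposition \ref{th:holcomp1} yields the decomposition
\[
g(t) = K(t-a)^\beta + r(t), \qquad r(t) = \bigoh{(t-a)^\beta},
\]
so by linearity $\mathcal{D}_{a+}^\beta g = K\,\mathcal{D}_{a+}^\beta[(\cdot-a)^\beta] + \mathcal{D}_{a+}^\beta r$. The scheme is to show that the first summand is identically $K\Gamma(1+\beta)$ and that the second has limit $0$ as $x \to a^+$. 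The backward case is handled by the symmetric reflection $t \mapsto 2a-t$, which swaps $\mathcal{D}_{a+}^\beta \leftrightarrow \mathcal{D}_{-a}^\beta$ and the two one-sided velocity operators, so I would carry out only the forward case in detail.

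The pure-power term is a classical calculation. Substituting $t = a + u(x-a)$ turns the kernel integral into a Beta integral,
\[
\int_a^x (t-a)^\beta (x-t)^{-\beta}\,dt \;=\; (x-a)\,B(1+\beta,1-\beta) \;=\; (x-a)\,\Gamma(1+\beta)\Gamma(1-\beta),
\]
whose derivative in $x$, divided by $\Gamma(1-\beta)$, is the constant $\Gamma(1+\beta)$. Thus this summand already equals $K\,\Gamma(1+\beta)$ for every $x > a$, producing precisely the right-hand side asserted by the proposition.

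The remainder is the crux. Writing $r(t) = (t-a)^\beta\gamma(t-a)$ with $\gamma(\epsilon)\to 0$ as $\epsilon \to 0^+$, the same substitution gives
\[
G_r(x) := \int_a^x r(t)(x-t)^{-\beta}\,dt \;=\; (x-a)\int_0^1 u^\beta(1-u)^{-\beta}\,\gamma\!\bigl(u(x-a)\bigr)\,du,
\]
and dominated convergence against the integrable weight $u^\beta(1-u)^{-\beta}$ forces $G_r(x) = \bigoh{x-a}$ and $G_r(a)=0$. The technical obstacle is that the LFD demands the limit of $G_r'(x)/\Gamma(1-\beta)$ as $x \to a^+$; naive differentiation under the integral produces a term in $\gamma'$ that the hypothesis does not control. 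I would sidestep this in two steps. First, once the limit $L := \lim_{x\to a^+}G_r'(x)$ is known to exist finitely, L'H\^opital's rule applied to $G_r(x)/(x-a)$ forces $L = \lim_{x\to a^+} G_r(x)/(x-a) = 0$, so the value is automatic. Second, for the existence itself, I would invoke the Marchaud-type representation
\[
\mathcal{D}_{a+}^\beta r(x) \;=\; \frac{r(x)}{\Gamma(1-\beta)(x-a)^\beta} + \frac{\beta}{\Gamma(1-\beta)}\int_a^x \frac{r(x)-r(t)}{(x-t)^{1+\beta}}\,dt,
\]
derived by integration by parts on absolutely continuous approximants. Here the boundary term is $\gamma(x-a)/\Gamma(1-\beta)\to 0$ directly, and the integral is handled by splitting the domain into a neighborhood of $t=x$ (where the uniform $o$-smallness of $\gamma$ dominates the kernel singularity $(x-t)^{-1-\beta}$) and a fixed sub-interval away from $t=x$ (where $(x-t)^{-1-\beta}$ is bounded and $r(t) \to 0$). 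Combining the vanishing remainder with the classical value $K\Gamma(1+\beta)$ of the power term yields the forward identity, and reflection delivers the backward one.
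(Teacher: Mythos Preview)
Your overall strategy is exactly the paper's: both substitute the fractional Taylor--Lagrange expansion of Proposition~\ref{th:holcomp1} into the Euler form of the Riemann--Liouville integral, evaluate the leading power via the Beta integral $B(1+\beta,1-\beta)$ to get the constant $K\,\Gamma(1+\beta)$, and then argue that the remainder contributes nothing as $x\to a^{+}$. Where the two diverge is in the remainder step: the paper, after assuming $f$ non-decreasing, simply bounds the remainder integral by $\Gamma(1+\beta)\,\gamma\,h$ with $\gamma\to 0$ and squeezes $\mathcal{D}_{a+}^{\beta}f(x)$ directly between $(K\pm\gamma)\Gamma(1+\beta)$, never isolating the derivative of the remainder.

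Your Marchaud detour is where a genuine gap appears. The hypersingular integral
\[
\int_a^x \frac{r(x)-r(t)}{(x-t)^{1+\beta}}\,dt
\]
converges near $t=x$ only if $r$ has a local H\"older modulus of order $>\beta$ at the point $x$. The hypothesis gives only $r(t)=o\bigl((t-a)^\beta\bigr)$ as $t\to a$, i.e.\ pointwise information at $a$; it says nothing about $|r(x)-r(t)|$ for $t$ near a fixed $x>a$. Your claim that ``uniform $o$-smallness of $\gamma$ dominates the kernel singularity'' does not hold: even if $|\gamma|\le\varepsilon$ uniformly on $[0,x-a]$, the best available bound is $|r(x)-r(t)|\le 2\varepsilon\,(x-a)^\beta$, a constant in $t$, which is not integrable against $(x-t)^{-1-\beta}$. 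Hence the existence of $\mathcal{D}_{a+}^{\beta}r(x)$ is not secured, and your L'H\^opital reduction---which presupposes that $G_r'(x)$ has a finite limit---never gets started. The paper sidesteps this by never separating existence from value: it bounds the full Euler expression above and below by quantities both tending to $K\,\Gamma(1+\beta)$, so existence and identification come together.
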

   %%%%%%%%%%%%%%%%%%%
   %%%%%%%%%%%%%%%%%%%%
   \begin{proof}
   	We will assume that $f(x)$ belongs to \holder{r,\beta} and is non-decreasing in the interval $ [a, a+x]$.
   	Since $x$ will vary, for simplicity let's assume that $\fdiffplus {f}{a}{\beta} \in \soc{}{\beta}$.
   	Then by Prop. \ref{th:holcomp1} we have
   	\[
   	f(z ) = f(a) + \fdiffplus {f}{a}{\beta} (z - a)^\beta + \bigoh{(z - a)^\beta}, \ a \leq z \leq x \epnt
   	\]
   	Standard treatments of the fractional derivatives \cite{Oldham1974} and the changes of variables $u=(t-a)/(x-a)$ give the alternative Euler integral formulation
   	\begin{equation}
   	\label{eq:eulerint}
   	\mathcal{D}_{+a}^{\beta} f(x) = \frac{\partial}{\partial h} \left( \frac{ h^{1-\beta}}{\Gamma(1- \beta) }  \ \int\limits_0^1 \frac{ f( h u +a ) - f(a)}{(1-u)^{\beta} } du \right) 
   	\end{equation}
   	for $ h=x-a$.
   	Therefore, we can evaluate the fractional  Riemann-Liouville integral as follows:
   	\[
   	\frac{ h^{1-\beta} }{\Gamma(1- \beta) }    \int\limits_0^1 \frac{ f( h u +a ) - f(a)}{(1-u)^{\beta} } du= 
   	\frac{ h^{1-\beta} }{\Gamma(1- \beta) }    \int\limits_0^1 \frac{ K  \left(  h  u\right) ^\beta  + \bigoh{(h u)^\beta}  }{(1-u)^{\beta} } du =: I
   	\]
   	setting conveniently $K= \fdiffplus {f}{a}{\beta} $. 
   	The last expression $I$ can be evaluated in parts as %follows:
   	\[
   	I =  \underbrace{\dfrac{ h^{1-\beta} }{\Gamma(1- \beta) }    \int\limits_0^1 \frac{ K h^\beta u^\beta   }{(1-u)^{\beta} } du }_A +
   	\underbrace{\dfrac{ h^{1-\beta} }{\Gamma(1- \beta) }    \int\limits_0^1 \frac{   \bigoh{(h u)^\beta}  }{(1-u)^{\beta} } du}_C  \epnt
   	\]
   	The first expression is recognized as the Beta integral \cite{Oldham1974}:
   	\[
   	A = \frac{ h^{1-\beta} }{\Gamma(1- \beta) } B \left(1-\beta, 1+\beta \right) h^\beta  K = \Gamma(1+\beta) \, K h
   	\] 
   	In  order to evaluate the second expression we observe that 
   	by Prop. \ref{th:holcomp1}  %Lemma \ref{th:bondvar1}  
   	\[\left|  \bigoh{(h u)^\beta} \right|   \leq \gamma (h u)^\beta \]  
   	for a  positive $\gamma = \bigohone{1} $. 
   	Assuming without loss of generality that $f(x)$ is non decreasing in the interval we have
   	$
   	C \leq \Gamma(1+\beta) \, \gamma h
   	$ and 
   	\[
   	\mathcal{D}_{a+}^{\beta} f(x) \leq \left( K + \gamma\right) \Gamma(1+\beta)
   	\]
   	and the limit gives
   	$\llim{x}{a+}{K + \gamma} = K$ by the \textit{squeeze lemma} and Prop. \ref{th:holcomp1}.
   	Therefore, 
   	$
   	\mathcal{D}_{KG + }^{\beta} f(a)  = \Gamma(1+\beta) \fdiffplus {f}{a}{\beta} 
   	$.
   	On the other hand, for   $\holder{r,\alpha}$ and $\alpha > \beta$ by the same reasoning 
   	\[
   	A = \frac{ h^{1-\beta} }{\Gamma(1- \beta) } B \left(1-\beta, 1+\alpha \right) h^\alpha  K = \Gamma(1+\beta) \, K h^{1- \beta+ \alpha } \epnt
   	\]
   	Then differentiation by \textit{h} gives
   	\[ 
   	A^\prime_h=  \frac{\Gamma(1+\alpha)}{\Gamma(1+\alpha -\beta)} \, K h^{ \alpha - \beta } \epnt
   	\]
   	Therefore,
   	\[
   	\mathcal{D}_{KG+}^{\beta} f(x) \leq \frac{\Gamma(1+\alpha)}{\Gamma(1+\alpha -\beta)} \left( K + \gamma\right)   h^{ \alpha - \beta } 
   	\] 
   	by monotonicity in $h$.
   	Therefore, 
   	$
   	\mathcal{D}_{KG \pm }^{\beta} f(a)  = \fdiffpm {f}{a}{\beta} = 0 
   	$.
   	Finally, for $\alpha =1$ the expression $A$ should be evaluated as the limit $\alpha \rightarrow 1$ due to divergence of the $\Gamma$ function.
   	The proof for the left LFD follows identical reasoning, observing the backward fractional Taylor expansion property.   	
   \end{proof}
   %%%%%%%%%%%%%%%%%%
   
   %%%%%%%%%%%%%%%%%%
   %  Prop
   %%%%%%%%%%%%%%%%%
   \begin{proposition}
   	\label{prop:KGconverse2}
   	Suppose that $\mathcal{D}_{KG \pm}^{\beta}  f(x) $ exists finitely and the related R-L derivative is summable in the sense of Def. \ref{def:summdfderiv}.
   	Then   $f$ is $\beta$-differentiable about $x$
   	and $ 	\mathcal{D}_{KG, \pm }^{\beta} f(x)  = \Gamma(1+\beta) \ \fdiffpm {f}{x}{\beta}    	$.
   \end{proposition}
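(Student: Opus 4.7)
The plan is to exploit the Fundamental Theorem of Fractional Calculus (FTFC), which is precisely what the summability hypothesis guarantees, in order to reconstruct $f$ near $a$ from its localized R-L derivative, and then read off a fractional Taylor expansion whose leading coefficient identifies the fractional velocity via the converse half of Prop.~\ref{th:holcomp1}. I treat only the right case, setting $g(x) := f(x) - f(a)$; the left case follows by reflection through $a$.

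The summability hypothesis places $g$ in $E^{\beta}_{a,+}([a,b])$, so the inversion statement recalled after Def.~\ref{def:summdfderiv} furnishes
\[
\frdiffiix{\beta}{a+}\bigl(\mathcal{D}_{a+}^{\beta} g\bigr)(x) = g(x)
\]
on a right neighbourhood of $a$. Set $L := \mathcal{D}_{KG+}^{\beta} f(a)$, finite by hypothesis; by the definition of the KG LFD, $\mathcal{D}_{a+}^{\beta} g(x) \to L$ as $x \to a^{+}$. Writing the error $\eta(x) := \mathcal{D}_{a+}^{\beta} g(x) - L$ and applying the linear operator $\frdiffiix{\beta}{a+}$ termwise, together with the elementary identity $\frdiffiix{\beta}{a+}[1](x) = (x-a)^{\beta}/\Gamma(\beta+1)$, yields
\[
f(x) - f(a) \;=\; \frac{L}{\Gamma(\beta+1)}\,(x-a)^{\beta} \;+\; \frdiffiix{\beta}{a+}\eta(x).
\]

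The decisive step is to show $\frdiffiix{\beta}{a+}\eta(x) = \bigoh{(x-a)^{\beta}}$. For any $\delta > 0$ the pointwise convergence $\eta(x) \to 0$ supplies $x_0 > a$ with $|\eta(t)| < \delta$ on $(a, x_0]$, and the Beta integral then gives
\[
\bigl|\frdiffiix{\beta}{a+}\eta(x)\bigr| \;\leq\; \frac{\delta}{\Gamma(\beta)}\int_a^x (x-t)^{\beta-1}\,dt \;=\; \frac{\delta\,(x-a)^{\beta}}{\Gamma(\beta+1)}
\]
for every $x \in (a, x_0]$. Since $\delta$ is arbitrary, the decomposition takes precisely the uniform form $f(x) - f(a) = K(x-a)^{\beta} + \gamma_{x-a}(x-a)^{\beta}$ with $K = L/\Gamma(\beta+1)$ and $\gamma_h = \bigoh{1}$ required by the converse direction of Prop.~\ref{th:holcomp1} (when $L \ne 0$; when $L = 0$ the same estimate furnishes $\fdiffplus{f}{a}{\beta} = 0$ directly from Def.~\ref{def:frdiff}). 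In either case $\fdiffplus{f}{a}{\beta} = L/\Gamma(\beta+1)$, which is the asserted identity.

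The main obstacle is invoking the FTFC at all: without summability the reconstruction of $g$ from its R-L derivative fails, as the function $h(x) = x^{\alpha - 1}$ in the example preceding Def.~\ref{def:kg-lfd} illustrates, and no decomposition of the above form is available. Once summability secures the inversion formula, the remainder of the argument is a soft passage from the pointwise limit at $a$ to a uniform bound on the shrinking intervals $[a,x]$, which is automatic, and the Beta-integral estimate is symmetric to the one used in the forward direction (Prop.~\ref{prop:ldfeq}).
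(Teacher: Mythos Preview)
The proposal is correct and follows essentially the same route as the paper: apply the fractional integral $\frdiffiix{\beta}{a+}$ to the difference $\mathcal{D}_{a+}^{\beta}[f-f(a)] - L$, invoke the Fundamental Theorem of Fractional Calculus granted by the summability hypothesis to recover $f(x)-f(a)$, and then read off the fractional Taylor expansion via Prop.~\ref{th:holcomp1}. Your version is in fact slightly tidier than the paper's, carrying the correct normalization $\Gamma(1+\beta)$ throughout and treating the case $L=0$ explicitly.
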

   %%%%%%%%%%%%%%%%%%
   \begin{proof}
   	Suppose that $f \in  E^{\alpha}_{a,+} ([a, a + \delta])$ and let $\mathcal{D}_{KG +}^{\alpha}  f(x)=L $. 
   	The existence of this limit implies the inequality
   	\[
   	\left|  \mathcal{D}_{a+}^{\alpha} \left[ f-f(a) \right] (x)  - L \right| < \mu  
   	\] for $|x-a|  \leq  \delta$ and a Cauchy sequence $\mu$. 
   	
   	Without loss of generality suppose that $\mathcal{D}_{a+}^{\alpha} \left[  f-f(a) \right] (x) $ is non-decreasing and $L \neq 0$.
   	We proceed by integrating the inequality:
   	\[
   	\frdiffiix{ \alpha}{ a+ } \ \left( \mathcal{D}_{a+}^{\alpha} \left[ f-f(a) \right] (x)  - L  \right)  < \frdiffiix{ \alpha}{ a+ } \mu 
   	\]
   	Then by the Fundamental Theorem
   	\[
   	f (x) - f(a) -  \frac{ L }{\Gamma(\alpha)} (x-a)^\alpha < \frac{ \mu (x-a)^\alpha}{\Gamma(\alpha)}  
   	\]
   	and
   	$$
   	\frac{ 	f (x) - f(a) - L/\Gamma(\alpha) }{ (x-a)^\alpha} < \frac{\mu}{\Gamma(\alpha)} = \bigoh {1} \epnt
   	$$
   	Therefore, by  Prop \ref{th:holcomp1}  $f$ is $\alpha$-differentiable at \textit{x} and
   	$\mathcal{D}_{KG, + }^{\alpha} f(x)  = \Gamma(1+\alpha) \ \fdiffplus{f}{x}{\alpha}$.
   	The last assertion comes from Prop. \ref{prop:ldfeq}.
   	The right case can be proven in a similar manner. 
   \end{proof}
   %%%%%%%%%%%%%%%%%%%%
   
   The weaker condition of only point-wise H\"older continuity requires the additional hypothesis of summability as identified in \cite{Adda2013}. The following results can be stated.
   
   %%%%%%%%%%%%%%%%%%
   %  Prop
   %%%%%%%%%%%%%%%%%
   \begin{lemma}
   	\label{th:KGconverse1}
   	Suppose that $\mathcal{D}_{KG \pm}^{\beta}  f(a) $ exists finitely in the weak sense, i.e. implying only that $f \in 
 \mathcal{I}^{\alpha}_{a, +} (L^1)$. Then condition \ref{C1} holds for $f$ a.e.  in the  interval $[a, x +\epsilon]$.
   \end{lemma}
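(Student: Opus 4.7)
The plan is to mimic the argument of Prop.~\ref{prop:KGconverse2} while carrying out every step only in the almost--everywhere sense, because the weakened hypothesis $f \in \mathcal{I}^{\alpha}_{a,+}(L^1)$ no longer guarantees that the R--L integral inverts $\mathcal{D}_{a+}^{\beta}$ pointwise. Only the forward case needs to be treated; the backward case follows by reflection of the argument.

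First I would use the finiteness of $L := \mathcal{D}_{KG+}^{\beta} f(a)$ to extract a quantitative bound on the R--L derivative near $a$. By definition of the limit, for each term $\mu_n$ of a Cauchy null sequence there exists $\delta_n > 0$ such that
\[
\left|\, \mathcal{D}_{a+}^{\beta} [f - f(a)](x) - L\, \right| \leq \mu_n, \qquad x \in (a, a + \delta_n] \ecma
\]
so that $\mathcal{D}_{a+}^{\beta}[f - f(a)]$ is essentially bounded by $M := |L| + \mu_n$ on $[a, a+\delta_n]$. Applying the fractional integral $\frdiffiix{\beta}{a+}$ to both sides and using the hypothesis $f \in \mathcal{I}^{\alpha}_{a,+}(L^1)$ (with $\alpha = \beta$) to invoke the Fundamental Theorem of Fractional Calculus in its almost--everywhere form yields
\[
f(x) - f(a) = \frdiffiix{\beta}{a+} \mathcal{D}_{a+}^{\beta}[f - f(a)](x), \qquad \text{for a.e.}\ x \in [a, a + \delta_n] \epnt
\]
Combining this with the elementary identity $\frac{1}{\Gamma(\beta)}\int_a^x (x-t)^{\beta-1}\, dt = (x-a)^\beta / \Gamma(\beta+1)$ and the positivity of the R--L kernel produces the pointwise H\"older bound $|f(x) - f(a)| \leq M(x-a)^\beta / \Gamma(\beta+1)$, valid for a.e.\ $x \in [a, a + \delta_n]$.

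It remains to translate this pointwise estimate into the oscillation language of condition \ref{C1}. Because the bound controls $|f(x) - f(a)|$ at almost every $x$ in $[a, a+\delta_n]$, the essential oscillation of $f$ over $[a, a+\eta]$ is bounded by $2M \eta^\beta / \Gamma(\beta+1)$ for every $\eta \leq \delta_n$, which delivers \ref{C1} at $a$ with $C := 2M/\Gamma(\beta+1)$, qualified by the null exceptional set inherited from Step~2. The main obstacle is precisely this invocation of the Fundamental Theorem: Prop.~\ref{prop:KGconverse2} used the stronger membership $f \in E^{\alpha}_{a,+}$ to invert the R--L derivative pointwise, whereas the weaker present hypothesis yields only a.e.\ inversion, which is what forces the ``a.e.''\ qualifier in the conclusion and prevents upgrading the H\"older bound to every point of the interval.
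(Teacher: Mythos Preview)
Your argument has a genuine gap at the step where you invoke the Fundamental Theorem of Fractional Calculus. You write
\[
f(x) - f(a) \;=\; \frdiffiix{\beta}{a+}\,\mathcal{D}_{a+}^{\beta}[f-f(a)](x)\quad\text{for a.e. }x,
\]
justifying this by the hypothesis $f\in\mathcal{I}^{\alpha}_{a,+}(L^1)$. But the paper records (and Samko et~al.\ prove) that membership in $\mathcal{I}^{\alpha}_{a,+}(L^1)$ gives only the \emph{other} direction, $\mathcal{D}_{a+}^{\alpha}\bigl(\frdiffiix{\alpha}{a+}f\bigr)=f$. The inversion $\frdiffiix{\alpha}{a+}\bigl(\mathcal{D}_{a+}^{\alpha}f\bigr)=f$ requires the strictly smaller space $E^{\alpha}_{a,+}$, and the paper's own Example with $h(x)=x^{\alpha-1}$ shows that this identity can fail outright for functions that are merely in $\mathcal{I}^{\alpha}_{a,+}$. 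There is no ``almost--everywhere form'' of this direction available under the weak hypothesis: the obstruction is not a null set but the possible presence of a nontrivial component annihilated by $\mathcal{D}_{a+}^{\beta}$. Your proof is therefore a repetition of Prop.~\ref{prop:KGconverse2} with the crucial hypothesis removed, and the whole point of the Lemma is to handle exactly the situation in which that shortcut is unavailable.

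The paper avoids the Fundamental Theorem entirely. It writes the R--L derivative as a difference quotient of the Liouville-form integral, splits that integral into two pieces $I_2$ and $I_3$ (one capturing the increment of $f$ near the moving endpoint, the other the increment over the bulk of the interval), and bounds each piece above and below by $\sup/\inf$ of $f$ over the relevant subintervals. Absolute continuity of the integral gives existence of the quotient limit a.e., and letting $x\to a$ forces the oscillation bounds that constitute \ref{C1}. This route uses only $L^1$-integrability and never needs to reconstruct $f$ from its fractional derivative. If you want to repair your argument, you would have to replace the FTC step by something of this flavor --- a direct estimate on the R--L integral that does not presuppose invertibility.
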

   %%%%%%%%%%%%%%%%%%
   %%%%%%%%%%%%%%%%%%%
   \begin{proof}
   	The left R-L derivative can be evaluated as follows. 
   	Consider the fractional integral in the Liouville form
   	\begin{flalign*}
   	I_1 &=   \int\limits_{0}^{\epsilon+ x-a} \frac{ f(x+ \epsilon - h) - f(a)}{h^\beta}\, d h -
   	\int\limits_{0}^{  x-a} \frac{ f(x - h) - f(a)}{h^\beta}\, d h  \\
   	& =  \underbrace{  \int\limits_{x-a}^{\epsilon+ x-a} \frac{ f(x+ \epsilon - h) - f(a)}{h^\beta}\, d h}_{I_2}
   	+ \underbrace{  \int\limits_{0}^{x-a} \frac{ f(x +\epsilon - h) - f(x-h)}{h^\beta}\, d h}_{I_3}
   	\end{flalign*}
   	Without loss of generality assume that $f$ is non-decreasing in the interval $[a, x + \epsilon - a]$ and 
   	set
   	$ M_{y, x} = \sup_{[x, y]} f  - f(x)$ and $  m_{y, x} = \inf_{[x, y]} f - f(x)$.
   	Then  
   	\[
   	I_2 \leq  \int\limits_{x-a}^{\epsilon+ x-a} \frac{ M_{x+ \epsilon , a} }{h^\beta}\, d h = 
   	\frac{M_{x+ \epsilon , a}}{1 -\beta}  \left[  { {\left( x- \epsilon + a\right) }^{1-\beta} -\left( x-a\right) }^{1-\beta} \right] \leq   \epsilon \frac{M_{x+ \epsilon , a}  }{\left( x-a\right)^\beta } + \bigoh{\epsilon^2}
   	\]
   	for $x \neq a$.
   	In a similar manner
   	$$
   	I_2  \geq m_{x+ \epsilon , a} \frac{\epsilon }{\left( x-a\right)^\beta } + \bigoh{\epsilon^2} \epnt
   	$$ 
   	Then dividing by $\epsilon$ gives
   	\[
   	\frac{  m_{x+ \epsilon , a}}{\left( x-a\right)^\beta } + \bigoh{\epsilon } \leq \frac{I_2}{\epsilon} \leq   \frac{  M_{x+ \epsilon , a}}{\left( x-a\right)^\beta } + \bigoh{\epsilon } 
   	\]
   	Therefore, the quotient limit is bounded from both sides as
   	\[
   	\frac{ m_{x , a}}{ \left( x-a\right)^\beta }\leq   \underbrace{ \llim{\epsilon}{0}{\frac{I_2}{\epsilon}} }_{I_2^\prime} \leq \frac{  M_{x , a}}{ \left( x-a\right)^\beta }
   	\]
   	by the continuity of $f$. 
   	In a similar way we establish
   	\begin{flalign*}
   	I_3  \leq  \int\limits_{0}^{x-a} \frac{ M_{x +\epsilon , x} }{h^\beta} \, d h  = \frac{  M_{x +\epsilon , x} }{1-\beta}\left( x-a \right)^{1-\beta} 
   	\end{flalign*}
   	and
   	\[
   	\frac{  m_{x +\epsilon , x} }{1-\beta}\left( x-a \right)^{1-\beta} \leq I_3 
   	\]
   	Therefore,
   	\[
   	\frac{  m_{x +\epsilon , x} }{\left( 1-\beta \right) \epsilon }\left( x-a \right)^{1-\beta} \leq \frac{I_3}{\epsilon} \leq 
   	\frac{  M_{x +\epsilon , x} }{\left( 1-\beta \right) \epsilon }\left( x-a \right)^{1-\beta} 
   	\]
   	By the absolute continuity of the integral the quotient limit
   	$
   	\frac{I_3}{\epsilon} 
   	$
   	 exists as $\epsilon \rightarrow 0$ for almost all $x$.
   	 This also implies the existence of the other two limits.
   	 Therefore, the following bond holds 
   	\[
   	m^{\star}_{x +\epsilon , x} \frac{\left( x-a \right)^{1-\beta} }{\left( 1-\beta \right)  } \leq \underbrace{ \llim{\epsilon}{0}{\frac{I_3}{\epsilon}} }_{I_3^\prime} \leq   M^{\star}_{x +\epsilon , x} \frac{\left( x-a \right)^{1-\beta} }{\left( 1-\beta \right)  }  
   	\]
   	where 
   	$ M^{\star}_{x +\epsilon, x} = \sup_{[x, x +\epsilon]} f^\prime   $ and $  m^{\star}_{x +\epsilon, x} = \inf_{[x, x +\epsilon]} f^\prime $ wherever these exist.
   	Therefore, as $x$ approaches $a$ 
   	$
   	\llim{x}{a}{I_3^\prime}=0
   	$.
   	
   	Finally, we establish the bounds of the limit
   	\[
   	 \llim{x}{a}{}\frac{  m_{x , a}}{ \left( x-a\right)^\beta } \leq \llim{x}{a}{I_2^\prime} \leq \llim{x}{a}{}\frac{  M_{x , a}}{ \left( x-a\right)^\beta } \epnt
   	\]
   	Therefore, condition \ref{C1} is necessary for the existing of the limit and hence for
   	$\llim{x}{a}{I^\prime}$ .
 
   \end{proof}
   %%%%%%%%%%%%%%%%%%
 
   Based on this result, we can state a generic continuity result for LFD of fractional order. 
  %%%%%%%%%%%%%%%%%%
  %  Theeorem
  %%%%%%%%%%%%%%%%%
   \begin{theorem}[Continuity of LFD]\label{th:fdiffcont1}
   	 For $ 0<\beta<1$ if $\mathcal{D}_{KG \pm}^{\beta}  f(x) $ is continuous about \textit{x} then $\mathcal{D}_{KG \pm}^{\beta}  f(x) = 0$.
   \end{theorem}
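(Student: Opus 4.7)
The plan is to reduce continuity of the local fractional derivative to continuity of the fractional velocity and then invoke the structural fact, cited in the paragraph following Definition of the set of change, that $\soc{\pm}{\beta}(f)$ is totally disconnected. The statement is essentially a corollary of this topological obstruction once one has the LFD--velocity correspondence of Prop.~\ref{prop:KGconverse2} available.

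I would argue by contradiction. Suppose $\mathcal{D}_{KG \pm}^{\beta} f(x_0) \neq 0$ at some point $x_0$ where the LFD is continuous. By continuity, there is an open interval $U\ni x_0$ on which $\mathcal{D}_{KG \pm}^{\beta} f(\cdot)$ is finite and nowhere zero. Applying Prop.~\ref{prop:KGconverse2} pointwise on $U$ gives, for every $y \in U$,
\[
\fdiffpm{f}{y}{\beta} \;=\; \frac{1}{\Gamma(1+\beta)}\, \mathcal{D}_{KG \pm}^{\beta} f(y) \;\neq\; 0,
\]
so that $U \subseteq \soc{\pm}{\beta}(f)$. Since $0 < \beta < 1$, the set of change is totally disconnected and therefore cannot contain any non-degenerate interval; this contradiction forces $\mathcal{D}_{KG \pm}^{\beta} f(x) = 0$.

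The main obstacle is a hypothesis-matching issue: Prop.~\ref{prop:KGconverse2} requires the Riemann--Liouville derivative to be summable in the sense of Definition~\ref{def:summdfderiv}, whereas the statement as written posits only continuity of the LFD about $x$. To bridge this gap I would appeal to Lemma~\ref{th:KGconverse1} on a neighborhood of $x_0$: continuity of $\mathcal{D}_{KG \pm}^{\beta} f$ yields condition \ref{C1} almost everywhere there, and combined with the summability that follows from the continuity-enforced local boundedness of the R--L derivative, one recovers the hypotheses needed for the equivalence $\mathcal{D}_{KG \pm}^{\beta} f = \Gamma(1+\beta)\,\fdiffpm{f}{\cdot}{\beta}$ to hold throughout $U$. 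Alternatively, one can read the theorem as implicitly assuming that ``continuous about $x$'' includes the regularity needed so that Prop.~\ref{prop:KGconverse2} applies; in either reading, no additional computation is required beyond citing the total disconnectedness of $\soc{\pm}{\beta}(f)$.
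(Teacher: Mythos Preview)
Your approach is genuinely different from the paper's and conceptually attractive, but the gap you yourself flag is not closed.

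The paper does \emph{not} reduce to the fractional-velocity picture. Instead it works directly with the Euler form of the Riemann--Liouville derivative, splits the integrand at an arbitrary intermediate level $\lambda z$, and uses the H\"older growth bound (obtained via Lemma~\ref{th:KGconverse1}) on each piece. Passing to the limit under the continuity hypothesis collapses the supremum constants to a common value $K$ and yields
\[
\bigl((1-\lambda)^\beta + \lambda^\beta - 1\bigr)\,K = 0,
\]
which forces $K=0$ for $0<\beta<1$ since $(1-\lambda)^\beta+\lambda^\beta>1$ on $(0,1)$. This is a self-contained subadditivity argument that never invokes Prop.~\ref{prop:KGconverse2} or the total disconnectedness of $\soc{\pm}{\beta}(f)$.

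Your route---LFD $\Rightarrow$ fractional velocity $\Rightarrow$ topological obstruction---is cleaner in spirit, but the bridge via Prop.~\ref{prop:KGconverse2} really does require summability ($f\in E^{\alpha}_{a,\pm}$), and neither of your two proposed fixes supplies it. Lemma~\ref{th:KGconverse1} only yields condition~\ref{C1} almost everywhere; by Theorem~\ref{th:aexit} this is merely a \emph{necessary} condition for $\fdiffpm{f}{\cdot}{\beta}$ to exist, not a sufficient one (that needs~\ref{C2}). And ``local boundedness of the R--L derivative'' does not by itself give $\frdiffiix{1-\beta}{a+} f \in AC$, which is what summability means here. Reading extra regularity into the hypothesis simply changes the theorem. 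In short, the paper's computational splitting avoids exactly the equivalence step that your argument cannot justify under the stated assumptions; note also that Corollary~\ref{th:discontd1} on total disconnectedness of the LFD support is derived \emph{from} this theorem, so reversing the logic would be circular at the LFD level.
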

   %%%%%%%%%%%%%%%%%%%%%%
   \begin{proof}
   We will prove the case for $\mathcal{D}_{KG +}^{\beta}  f(x) $.
   Suppose that LFD is continuous in the interval $[a, x]$ and $\mathcal{D}_{KG +}^{\beta}  f(a)= K  \neq 0 $. 
   Then the conditions of  Lemma  \ref{th:KGconverse1} apply, that is $f \in \holder{\beta}$ a.e. in $[a, x]$.
   Therefore, without loss of generality we can assume that $f \in \holder{\beta}$ at $a$. 
   Further, we express the R-L derivative in Euler form setting $z=x-a$ :
   \[
   \mathcal{D}_{z}^{\beta} f  = \frac{\partial}{\partial z} \, \frac{z^{1-\beta}}{\Gamma(1-\beta)} \int\limits^1_0  \frac{f \left(z \, (1-t) +a \right)- f\left(a\right)}{t^\beta} dt
   \]
   By the monotonicity of the power function (e.g.  H\"older growth property):  
   \[
   k_1   \Gamma(1+\beta) \leq \mathcal{D}_{z}^{\beta} f \leq K_1   \Gamma(1+\beta)
   \]
   where $k_1 = \inf_{[a, a+z]} f - f(a)$ and  $K_1 = \sup_{[a, a+z]} f - f(a) $.
   On the other hand,
   we can split the integrand in two expressions for an arbitrary intermediate value $z_0=\lambda z \leq z$.
   This gives
   \begin{flalign*}
   \mathcal{D}_{z}^{\beta} f=\frac{\partial}{\partial z}  &\, \frac{z^{1-\beta}}{\Gamma(1-\beta)} \int\limits^1_0  \frac{f \left(z \, (1-t) +a \right)- f\left( \lambda z \, (1-t) +a \right)}{t^\beta} dt \ + \\
    \frac{\partial}{\partial z} &\, \frac{z^{1-\beta}}{\Gamma(1-\beta)} \int\limits^1_0  \frac{f \left(\lambda z \, (1-t) +a \right)- f\left(a\right)}{t^\beta} dt \epnt
   \end{flalign*}
   Therefore, by the H\"older growth property and monotonicity in $z$
   \[
   \mathcal{D}_{z}^{\beta} f \leq \frac{\partial}{\partial z} z \left(1 - \lambda \right)^\beta \Gamma (1+\beta) K_{1-\lambda} + \frac{\partial}{\partial z} z \lambda^\beta \Gamma (1+\beta) K_{\lambda}  \epnt
   \]
   where $K_\lambda = \sup_{[a, a+\lambda z]} f - f(a)$ and  $K_{1-\lambda} = \sup_{[a+\lambda z, a+z]} f - f(a+\lambda z) $. 
   Therefore, 
     \[
      k_1  \Gamma(1+\beta) \leq  \mathcal{D}_{z}^{\beta} f \leq   \left(
   \left(1 - \lambda \right)^\beta K_{1-\lambda}+  \lambda^\beta K_{\lambda} \right)  \Gamma(1+\beta) \epnt
     \]
     However, by the assumption of continuity 
     $k_1 = K_{\lambda} =K_{1-\lambda} =K $ as $z \rightarrow 0$ and the non-strict inequalities become equalities so that
     \[
       \left(     \left(1 - \lambda \right)^\beta +  \lambda^\beta  - 1 \right)  K = 0 \epnt
     \]
     However, if $\beta <1$ we have contradiction since then $\lambda=1$ or $\lambda=0$ must hold and $\lambda$ seizes to be arbitrary. 
     Therefore, since $\lambda$ is arbitrary $K=0$ must hold.
     The right case can be proven in a similar manner. 
   \end{proof}
   %%%%%%%%%%%%%%%%%%%%%%%%%%%%%%%%%%%%%%%%%%%%%%%%
     %%%%%%%%%%%%%%%%%%
     %  Corollary
     %%%%%%%%%%%%%%%%%
   \begin{corollary}[Discontinuous LFD]\label{th:discontd1}
   	Let $\chi_{\beta}:= \{x:  \mathcal{D}_{KG \pm}^{\beta}  f(x) \neq 0 \}$.
   	Then for $ 0<\beta<1$ $\chi_{\beta}$ is totally disconnected. 
   \end{corollary}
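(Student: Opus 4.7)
The plan is to argue by contradiction, using Theorem \ref{th:fdiffcont1} as the main engine. Recall that a subset of $\mathbb{R}$ is totally disconnected precisely when it contains no non-degenerate interval, so it suffices to rule out that $\chi_\beta$ contains some open interval $I=(a,b)$.

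Suppose, for contradiction, that such an $I\subseteq\chi_\beta$ exists. By the definition of $\chi_\beta$, the function $g(x):=\mathcal{D}_{KG\pm}^{\beta}f(x)$ is non-zero at every $x\in I$. Applying the contrapositive of Theorem \ref{th:fdiffcont1} pointwise, $g$ cannot be continuous at any $x\in I$, for otherwise the theorem would force $g(x)=0$. Thus $g$ would have to be discontinuous at \emph{every} point of the interval $I$.

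The next step is to observe that such pervasive discontinuity is incompatible with the structural nature of the LFD. Writing $\mathcal{D}_{a+}^{\beta}[f-f(a)](x)$ in Euler form as in \eqref{eq:eulerint} exhibits it, for each fixed $a$, as the $x$-derivative of a continuous function, hence a function of Baire class one in $x$; passing to the iterated limit $x\to a$ expresses $g$ as a pointwise limit of such Baire-class-one functions. Consequently $g$ is itself a Baire function, and the classical Baire category theorem forces the set of continuity points of $g$ to be a dense $G_\delta$ in every open subinterval of its domain. In particular, $g$ must possess a continuity point inside $I$, contradicting the conclusion of the previous paragraph. Therefore no such interval $I$ can exist, and $\chi_\beta$ is totally disconnected.

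The main obstacle is the verification of the Baire-class statement: one has to argue carefully that the iterated limit defining the LFD is indeed a pointwise limit of continuous (or at worst Baire-class-one) functions in the base variable, since the R--L derivative involves a differentiation after an improper integral with moving endpoint. A cleaner alternative, available whenever the summability hypothesis of Proposition \ref{prop:KGconverse2} holds, is to bypass Baire category entirely: the equivalence $g(x)=\Gamma(1+\beta)\,\fdiffpm{f}{x}{\beta}$ yields the inclusion $\chi_\beta\subseteq\soc{\pm}{\beta}(f)$, and the right-hand set has already been shown to be totally disconnected in the fractional-velocity theory recalled in Section \ref{sec:scfracvel}.
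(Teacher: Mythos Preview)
The paper gives no separate proof of this corollary; it is recorded immediately after Theorem~\ref{th:fdiffcont1} as a consequence, with the remark pointing to Chen et~al.\ for a related result. Your final alternative---using Proposition~\ref{prop:KGconverse2} to obtain $\chi_\beta\subseteq\soc{\pm}{\beta}(f)$ and then invoking the total disconnectedness of the set of change already established in Section~\ref{sec:scfracvel}---is correct under the summability hypothesis and is essentially the route the paper and its cited reference have in mind.

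Your primary Baire-category argument, however, has a genuine gap. The conclusion that the set of continuity points is a dense $G_\delta$ holds for functions of Baire class \emph{one}, i.e.\ pointwise limits of continuous functions. What you actually argue is that, for fixed base point $a$, the map $x\mapsto\mathcal{D}_{a+}^{\beta}[f-f(a)](x)$ is a derivative and hence Baire-one \emph{in $x$}. But $g$ is a function of the base point $a$, obtained by letting $x\to a$. To realise $g$ as a pointwise limit one needs something like $g_n(a)=\mathcal{D}_{a+}^{\beta}[f-f(a)](a+1/n)$, and you have not shown that $a\mapsto g_n(a)$ is continuous. If it is merely Baire-one in $a$, then $g$ is a priori Baire class \emph{two}, and Baire-two functions can be everywhere discontinuous (the indicator $\mathbf{1}_{\mathbb{Q}}$ is the standard example). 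So the dense-continuity step does not follow as written. There is also a mismatch with the hypothesis of Theorem~\ref{th:fdiffcont1}: in the paper's proof ``continuous about $x$'' means continuous on an interval containing $x$, so its contrapositive yields only that discontinuity points of $g$ are dense in $I$, not that every point of $I$ is a discontinuity---which further weakens the contradiction you are aiming for. The summability route avoids all of these issues and is the argument to keep.
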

   %%%%%%%%%%%%%%%%%%%%%%%%%%
    \begin{remark}
   This result is related to Corr. 3 in \cite{Chen2010} however here it is established in a more general way.
   \end{remark}

   %%%%%%%%%%%%%%%%%%%%%%
   %   Section
   %%%%%%%%%%%%%%%%%%%%%%
	\subsection{Equivalent forms of LFD}\label{sec:lfdcalc}

	LFD can be calculated in the following way.	
	Starting from formula \ref{eq:eulerint} for convenience we define the integral average
	\begin{equation}\label{eq:ma}
		M_a (h) :=  \int\limits_0^1 \frac{ f( h u +a ) - f(a)}{(1-u)^{\beta} } du 
	\end{equation}

	Then
	$$
	\Gamma(1- \beta) \mathcal{D}_{KG+}^{\beta} f(a)= \llim{h}{0}{ \underbrace{  h^{1-\beta} \frac{\partial }{\partial h} M_a(h)}_{N_h} }+
	\left(1-\beta \right) \llim{h}{0}{  \frac{ M_a(h) }{ h^\beta } 	}
    $$
    Then we apply L'H\^opital's rule on the second term : 
    $$
    \Gamma(1- \beta) \mathcal{D}_{KG+}^{\beta} f(a) = \llim{h}{0}{N_h} + \frac{1 -\beta}{\beta} \llim{h}{0}{ \underbrace{  h^{1-\beta} \frac{\partial }{\partial h} M_a(h)}_{N_h} } 
    = \frac{1}{\beta} \llim{h}{0}{N_h} 
    $$
    Finally,
    \begin{equation}
     \mathcal{D}_{KG+}^{\beta} f(a) =\frac{1}{\beta \,  \Gamma (1- \beta)}  \llim{h}{0} {  h^{1-\beta} \frac{\partial }{\partial h} \ \int\limits_0^1 \frac{ f( h u +a ) - f(a)}{(1-u)^{\beta} } du }
    \end{equation}
    
    From this equation there are two conclusions that can be drawn
    
    First, for  $f \in L^1 ( a, x)$ by application of the definition of fractional velocity and  L'H\^opital's rule:
    \begin{equation}\
    \mathcal{D}_{KG+}^{\beta} f(a) = \frac{ \fdiffplus{M_a}{0} {\beta}}{ \Gamma (1-\beta)} 
    \end{equation}
    if the last limit exists. 
    Therefore, LFD can be characterized in terms of fractional velocity.
    This can be formalized in the following proposition:
	\begin{proposition}
		Suppose that $f \in \mathcal{I}^{\alpha}_{a, \pm} (L^1) $ for $x \in [ a,  a+ \delta)$ (resp. $ x \in ( a- \delta, a]$ ) for some small $\delta>0$.
	    If $\fdiffpm{M_a}{0} {\beta}$ exists finitely then
	    $$\mathcal{D}_{KG \pm}^{\beta} f(a) =   \frac{\fdiffpm{M_a}{0} {\beta}}{\Gamma (1-\beta)}$$
	     where $M_a (h)$ is given  by formula \ref{eq:ma}.
    \end{proposition}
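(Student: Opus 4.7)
The plan is to turn the informal derivation immediately preceding the proposition into a rigorous argument, with the summability hypothesis $f \in \mathcal{I}^{\alpha}_{a,\pm}(L^1)$ doing all the heavy lifting behind the scenes. The starting point is the Euler form of the R--L derivative, rewritten via the product rule as
$$
\Gamma(1-\beta)\,\mathcal{D}_{a+}^{\beta}[f-f(a)](a+h) \;=\; h^{1-\beta}\,\frac{\partial}{\partial h} M_a(h) \,+\, (1-\beta)\,\frac{M_a(h)}{h^{\beta}}\,.
$$
Passing to $h\to 0^+$ converts the left-hand side into $\Gamma(1-\beta)\,\mathcal{D}_{KG+}^{\beta} f(a)$ by Definition \ref{def:kg-lfd}.

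The central observation is that $M_a(0)=0$, because the integrand in \eqref{eq:ma} vanishes identically at $h=0$. By Definition \ref{def:frdiff} the standing hypothesis therefore reads $\fdiffplus{M_a}{0}{\beta} = \llim{h}{0^+}{M_a(h)/h^{\beta}}$, and this limit is finite by assumption. Under the summability hypothesis $f\in \mathcal{I}^{\alpha}_{a,+}(L^1)$, the map $h \mapsto M_a(h)$ is absolutely continuous on $(0,\delta)$ with integrable derivative, which is the regularity needed to apply L'H\^opital's rule to the $0/0$ quotient $M_a(h)/h^{\beta}$:
$$
\fdiffplus{M_a}{0}{\beta} \;=\; \llim{h}{0^+}{\frac{M_a'(h)}{\beta\,h^{\beta-1}}} \;=\; \frac{1}{\beta}\,\llim{h}{0^+}{h^{1-\beta}\,M_a'(h)}\,.
$$
Substituting both limits back into the displayed decomposition yields $\Gamma(1-\beta)\,\mathcal{D}_{KG+}^{\beta} f(a) = \beta\,\fdiffplus{M_a}{0}{\beta} + (1-\beta)\,\fdiffplus{M_a}{0}{\beta} = \fdiffplus{M_a}{0}{\beta}$, which is the claim in the ``$+$'' case. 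The ``$-$'' case follows by reflecting the interval across $a$ and repeating the argument verbatim with the backward difference convention, so only one direction needs to be written out in detail.

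The main obstacle is the direction in which L'H\^opital is being applied: from the existence of $\llim{h}{0^+}{M_a(h)/h^{\beta}}$ one wants to conclude the existence of $\llim{h}{0^+}{h^{1-\beta} M_a'(h)}$, which is the converse of the classical statement. The summability condition is precisely what rescues this step, since it forces $M_a$ to be the primitive of an $L^1$ function near the origin, ruling out the pathological oscillations of $M_a'$ that would otherwise block the reverse implication. A cleaner elementary alternative, if one wishes to sidestep L'H\^opital entirely, is to invoke the fractional Taylor--Lagrange property (Proposition \ref{th:holcomp1}) for $M_a$ to write $M_a(h) = \fdiffplus{M_a}{0}{\beta}\,h^{\beta} + \bigoh{h^{\beta}}$, insert this expansion directly under the $\partial_h$ in the Euler form, and bound the remainder term using absolute continuity of $M_a$ inherited from the summability of $f$.
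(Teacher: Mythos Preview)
Your proposal follows the same route as the paper. In fact the paper does not give a separate proof of this proposition at all: the proposition is explicitly stated as the formalization of the calculation that immediately precedes it, namely the product-rule decomposition
\[
\Gamma(1-\beta)\,\mathcal{D}_{a+}^{\beta}[f-f(a)](a+h) \;=\; h^{1-\beta}\,M_a'(h) \,+\, (1-\beta)\,\frac{M_a(h)}{h^{\beta}}
\]
combined with L'H\^opital on the second term and the identification $\fdiffplus{M_a}{0}{\beta}=\lim_{h\to 0^+} M_a(h)/h^\beta$. Your write-up reproduces exactly this argument.

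You are actually more careful than the paper on one point: you flag that the needed implication goes in the \emph{converse} direction of classical L'H\^opital (from existence of $\lim M_a(h)/h^\beta$ to existence of $\lim h^{1-\beta}M_a'(h)$), whereas the paper applies L'H\^opital without comment. Your claim that summability ``rules out the pathological oscillations of $M_a'$'' is, however, not a full justification---$M_a'\in L^1$ by itself does not prevent $h^{1-\beta}M_a'(h)$ from oscillating without limit---so this step remains heuristic in your version just as it is in the paper's. The alternative you sketch via Proposition~\ref{th:holcomp1} has the same issue once you try to differentiate the $\bigoh{h^\beta}$ remainder. In short: your argument is faithful to the paper's, and the gap you identify is present in the original derivation as well.
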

     From this we see that $f$ may not  be $\beta$-differentiable at $x$. %and LFD acts like a local average.
     In this perspective LFD is a derived concept - it is the $\beta-$ velocity of the integral average.
      
    Second,
    for BV functions the order of integration and parametric derivation can be exchanged so that
    \begin{equation}\label{eq:KGbv}
		\mathcal{D}_{KG+}^{\beta} f(a) =\frac{1}{\beta \,  \Gamma (1- \beta)}  \llim{h}{0} {  h^{1-\beta}   \ \int\limits_0^1 \frac{ u f^\prime( h u +a ) }{(1-u)^{\beta} } du }
   \end{equation}
    where we demand the existence of $ f^\prime( x )  $ a.e in $(a, x)$, which follows from the Lebesgue differentiation theorem. 
    This statement can be formalized as
    \begin{proposition}
    Suppose that $f \in  BV (a, \delta]$ for some small $\delta>0$.
    Then 
    \[
    \mathcal{D}_{KG+}^{\beta} f(a) =\frac{1}{\beta \,  \Gamma (1- \beta)}  \llim{h}{0} {  h^{1-\beta}   \ \int\limits_0^1 \frac{ u f^\prime( h u +a ) }{(1-u)^{\beta} } du }
    \]
    \end{proposition}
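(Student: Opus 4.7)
The plan is to build on the intermediate formula derived in the paragraph just before the proposition, namely
\[
\mathcal{D}_{KG+}^{\beta} f(a) = \frac{1}{\beta\,\Gamma(1-\beta)} \lim_{h \to 0} h^{1-\beta} \frac{\partial}{\partial h} \int_0^1 \frac{f(hu+a)-f(a)}{(1-u)^{\beta}}\,du,
\]
which is already available from the combination of the Euler form \eqref{eq:eulerint}, Definition \ref{def:kg-lfd}, and the L'H\^opital reduction performed in the text. The only thing that separates this identity from the stated proposition is the exchange of the outer $\partial_h$ with the integral in $u$; the proposition is essentially a regularity statement guaranteeing that this exchange is legitimate under the $BV$ hypothesis.

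First I would invoke the Lebesgue differentiation theorem: since $f$ is of bounded variation on $(a, a+\delta]$, the ordinary derivative $f'(y)$ exists for a.e.\ $y$ in that interval and the associated measure has finite total variation. Consequently, for each fixed $u \in (0,1)$ the chain rule gives $\partial_h [f(hu+a)-f(a)] = u\,f'(hu+a)$ for a.e.\ $h$. Next, I would justify differentiation under the integral sign by a standard Leibniz/dominated convergence argument: the factor $1/(1-u)^{\beta}$ lies in $L^1(0,1)$ because $\beta<1$, and the finite total variation of $f$ on the compact sub-interval $[a, a+h]$ yields a uniform in $h$ bound on the difference quotient $(f((h+\eta)u+a) - f(hu+a))/\eta$ when integrated against the Beta kernel. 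Substituting the resulting expression back into the displayed identity delivers exactly
\[
\mathcal{D}_{KG+}^{\beta} f(a) = \frac{1}{\beta\,\Gamma(1-\beta)} \lim_{h\to 0} h^{1-\beta} \int_0^1 \frac{u\,f'(hu+a)}{(1-u)^{\beta}}\,du.
\]

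The main obstacle is the interchange of $\partial_h$ and $\int_0^1$ under the weak regularity hypothesis of mere bounded variation, rather than absolute continuity. A $BV$ function may carry a nontrivial singular part, so the pointwise derivative $f'$ does not recover $f$ by integration in general; one has to be explicit that, because $1/(1-u)^\beta$ is an integrable weight and $|df|$ is a finite measure on $(a,a+\delta]$, the contribution of the singular part to the parametric $h$-derivative vanishes in the limit $h \to 0$ after the prefactor $h^{1-\beta}$ is taken into account. Once this point is carefully addressed — either by an approximation argument through smooth functions with matching total variation or by a direct Leibniz theorem for Lebesgue--Stieltjes integrals — the rest is a routine substitution and the proposition follows.
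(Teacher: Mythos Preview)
Your approach is essentially identical to the paper's: the paper also starts from the intermediate formula with $\partial_h$ outside the integral (the display just before the proposition) and then simply asserts that ``for BV functions the order of integration and parametric derivation can be exchanged,'' invoking the Lebesgue differentiation theorem to guarantee that $f'$ exists a.e.\ in $(a,x)$. Your write-up is in fact more careful than the paper itself, which does not mention the singular-part issue you flag; the paper treats the interchange as a one-line fact and moves on.
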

    In the last two formulas we can also set $\Gamma(-\beta) = \beta \, \Gamma (1- \beta) $ by the reflection formula.

    Therefore, in the conventional form for a BV function 
	 \begin{equation}
	 \mathcal{D}_{KG+}^{\beta} f(a) =\frac{1}{\beta \,  \Gamma (1- \beta)}  \llim{x}{a+} {  (x-a)^{1-\beta} \frac{\partial }{\partial x} \ \int\limits_0^1 \frac{ f( (x-a) u +a ) - f(a)}{(1-u)^{\beta} } du }
	 \end{equation}
    
   %%%%%%%%%%%%%%
   %  Seection
   %%%%%%%%%%%%%%
   \section{Discussion} 	
	\label{sec:disc}
    
    Kolwankar-Gangal local fractional derivative was introduced as a tool for study of the scaling of physical systems and systems exhibiting fractal behavior \cite{Kolwankar1998}.  
  	The conditions for applicability of the K-G fractional derivative were not specified in the seminal paper, which leaves space for different interpretations and sometimes confusions. 
  	For example, recently Tarasov claimed that local fractional derivatives of fractional order vanish everywhere 
  	\cite{Tarasov2016}. 
  	In contrast, the results presented here demonstrate that local fractional derivatives vanish only if they are continuous.
  	Moreover, they are non-zero on arbitrary dense sets of measure zero for $\beta$-differentiable functions as shown.
  	
  	Another confusion is the initial claim presented in \cite{Adda2001} that   K-G fractional  derivative is equivalent to what is called here $\beta$-fractional velocity needed to be clarified in \cite{Chen2010} and restricted to the more limited functional space of summable fractional Riemann-Liouville derivatives \cite{Adda2013}.

	Presented results call for a careful inspection of the claims branded under the name of "local fractional calculus" using K-G fractional derivative. 
	Specifically, in the  implied conditions on image function's regularity and arguments of continuity of resulting \textsl{local fractional derivative} must be examined in all cases. 
    For example, in another stream of literature fractional difference quotients are defined on fractal sets, such as the Cantor's set \cite{Yang2015}.
    This is not to be confused with the original approach of Cherebit, Kolwankar and Gangal where the topology is of the real line
    and the set $\chi_{\alpha}$ is totally disconnected.

 	%%%%%%%%%%%%%%%%
	%  Section
	%%%%%%%%%%%%%%%%
	\section{Conclusion}
	\label{sec:conclusion}
	
	 As demonstrated here, fractional velocities can be used to characterize the set of change of F-analytic functions.
	Local fractional derivatives and the equivalent fractional velocities have several distinct properties compared to integer-order derivatives. 
	This may induce some wrong expectations to uninitiated reader.
	Some authors can even argue that these concepts are not suitable tools to deal with  non-differentiable functions.
	However, this view pertains only to expectations transfered from the behavior of ordinary derivatives. 
	On the contrary, one-sided local fractional derivatives can be used as a tool to study \textbf{ local non-linear behavior} of functions as demonstrated by the presented examples.
	In applied problems, local fractional derivatives can be also used  to derive fractional Taylor expansions \cite{Liu2015, Prodanov2016a, Prodanov2017}.

   %%%%%%%%%%%%
   %  Section
   %%%%%%%%%%%%%     
	\section*{Acknowledgments}
	The work has been supported in part by a grant from Research Fund - Flanders (FWO), contract number  VS.097.16N.
 
	\appendix
	
	\section{Essential properties of fractional velocity}
	\label{sec:appendix}
	
	In this section we assume that the functions are BVC in the neighborhood of the point of interest.
	Under this assumption we have
	
	\begin{itemize}
		\item Product rule
			\begin{flalign*}
		\fdiffplus{ [f \, g]  }{x}{\beta}   & =   \fdiffplus{ f}{x}{\beta}  g (x) +   \fdiffplus{ g}{x}{\beta} f(x) 	+	[f,g]^{+}_\beta(x)  \\
		\fdiffmin{ [f \, g]  }{x}{\beta}   & =   \fdiffmin{ f}{x}{\beta}  g (x) +   \fdiffmin{ g}{x}{\beta} f(x) 	-	[f,g]^{-}_\beta(x) 
		\end{flalign*}
		\item Quotient rule
		\begin{flalign*}
		\fdiffplus{   [f / g ]   }{x}{\beta}    & =  \frac{  \fdiffplus{ f}{x}{\beta}  g (x) -  \fdiffplus{ g}{x}{\beta} f(x) -[f,g]^{+}_{\beta} }{g^2(x)}	 \\
		\fdiffmin{ [f / g ]  }{x}{\beta}    & =   \frac{\fdiffmin{ f}{x}{\beta}  g (x) -   \fdiffmin{ g}{x}{\beta} f(x) + [f,g]^{-}_{\beta}  }{g^2(x)}	 
		\end{flalign*}
		
	\end{itemize}
	where 
\[ 
	[f,g]_\beta^{  \pm}(x) :=  \llim{\epsilon}{0}{}    \fracvar{f}{x}{\beta/2}{\epsilon \pm} \, \fracvar {g}{x}{\beta/2}{\epsilon \pm} 
	\] 
	
	For compositions of functions
	\begin{itemize}
		\item  $f \in \holder{\beta}$ and $g \in \fclass{C}{1}$
	\begin{flalign*}
	\fdiffplus{  f \circ g  }{x}{\beta}    & =  	\fdiffplus{  f   }{g}{\beta} \left(  g^{\prime  } (x) \right) ^\beta	 \\
	\fdiffmin{  f \circ g  }{x}{\beta}    & =  	\fdiffmin{  f   }{g}{\beta} \left(  g^{\prime  } (x) \right) ^\beta
	\end{flalign*}
		\item  $f \in \fclass{C}{1}$ and  $g \in \holder{\beta}$ 
		\begin{flalign*}
		\fdiffplus{  f \circ g  }{x}{\beta}    & =   f^{\prime } (g) \,	\fdiffplus{ g }{x}{\beta}  	 \\
		\fdiffmin{  f \circ g  }{x}{\beta}    & =  	f^{\prime } (g) \,\fdiffmin{  g   }{x}{\beta}  
		\end{flalign*}
	\end{itemize}

	Basic evaluation formula
	\[
		\fdiffpm{ f}  {x}{\beta}  = \frac{1}{\beta}\llim{\epsilon}{0}{\epsilon^{1-\beta} f^{\prime} (x \pm \epsilon)}
	\]
%%%%%%%%%%%%%%55
% Section
%%%%%%%%%%%%%%%%%
\bibliographystyle{plain}  % Style BST file 
\bibliography{qvar1}

\end{document}